\newtheorem{theorem}{Theorem}[section]
\newtheorem{corollary}[theorem]{Corollary}
\newtheorem{remark}[theorem]{Remark}
\journal{Transportation Research Part C}
\begin{document}

\begin{frontmatter}



 \begin{center}
\textcolor{blue}{ARTICLE LINK:  http://www.sciencedirect.com/science/article/pii/S0968090X15001345
\\  PLEASE CITE THIS ARTICLE AS\\ 
Han, K., Liu, H., Gayah, V., Friesz, T.L., Yao, T., 2015. A robust optimization approach for dynamic traffic signal control with emission considerations. Transportation Research Part C, DOI: 10.1016/j.trc.2015.04.001.}
 \line(1,0){469}
 \end{center}

\title{A robust optimization approach for dynamic traffic signal control with emission considerations}


\author[ic]{Ke Han\corref{cor}}
\ead{k.han@imperial.ac.uk}

\author[ie]{Hongcheng Liu}
\ead{hql5143@psu.edu}

\author[cee]{Vikash V. Gayah}
\ead{gayah@engr.psu.edu}

\author[ie]{Terry L. Friesz} 
\ead{tfriesz@psu.edu}

\author[ie]{Tao Yao}
\ead{tyy1@engr.psu.edu}

\cortext[cor]{Corresponding author}

\address[ic]{Department of Civil and Environmental Engineering, Imperial College London, United Kingdom.}

\address[ie]{Department of Industrial and Manufacturing Engineering, Pennsylvania State University, USA.}

\address[cee]{Department of Civil and Environmental Engineering, Pennsylvania State University, USA.}

\begin{abstract}
We consider an analytical signal control problem on a signalized network whose traffic flow dynamic is described by the Lighthill-Whitham-Richards (LWR) model \citep{Lighthill and Whitham, Richards}. This problem explicitly addresses traffic-derived emissions as side constraints. We seek to tackle this problem using a  mixed integer mathematical programming approach. Such a class of problems, which we call LWR-Emission (LWR-E), has been analyzed before to certain extent. Since mixed integer programs are practically efficient to solve in many cases \citep{BCR}, the mere fact of having integer variables is not the most significant challenge to solving LWR-E problems;  rather, it is the presence of the potentially nonlinear and nonconvex emission-related constraints/objectives that render the program computationally expensive.

To address this computational challenge, we proposed a novel reformulation of the LWR-E problem as a {\it mixed integer linear program} (MILP). This approach relies on the existence of a statistically valid macroscopic relationship between the aggregate emission rate and the vehicle occupancy of the same link. This relationship is approximated with certain functional forms and the associated uncertainties are handled explicitly using robust optimization (RO) techniques. The RO allows emissions-related constraints and/or objectives to be reformulated as linear forms under mild conditions. To further reduce the computational cost, we employ the {\it link transmission model} to describe traffic dynamics with the benefit of fewer (integer) variables and less potential traffic holding. The proposed MILP explicitly captures vehicle spillback,  avoids traffic holding, and simultaneously minimizes travel delay and addresses emission-related concerns. 
\end{abstract}

\begin{keyword}
signal control \sep emission consideration  \sep robust optimization \sep mixed integer linear program
  
\end{keyword}

\end{frontmatter}

\section{\label{Intro}Introduction}

Traffic signals tend to be the primary focus of urban traffic control and management strategies since they generally serve as the most frequent and restrictive bottlenecks on urban streets. Over time, the implementation of traffic signal control has evolved greatly: from simple fixed-time plans based on historical data and updated infrequently throughout the day to adaptive control systems that update continuously in response to real-time traffic information. The performance of a particular strategy depends on several factors: the optimization procedure employed to select signal timings, underlying model used to predict the evolution of traffic dynamics and the objective function considered in the optimization procedure.

Here, we distinguish between two types of optimization procedures: (1) heuristic approaches, such as those developed with feedback control, genetic algorithms and fuzzy logic \citep{KZMT, Zhang}; and (2) exact approaches, such as those arising from mathematical control theory and  mathematical programming. Although useful for very large and complex optimization problems, heuristic approaches suffer from a failure to provide optimal solutions. Instead, these are more appropriate when exact approaches are computationally intractable. However, {\it mixed integer programs} (MIPs) have been used extensively in the signal control literature and are of particular interest due to their tractable for smaller networks. For example, \cite{Improta and Cantarella} formulated and solved the traffic signal control problem for a single road junction as a mixed binary integer program. \cite{Lo 1999a} and \cite{Lo 1999b} formulated the network-level signal control problem as a mixed integer linear program using the {\it cell transmission model} (CTM) \citep{CTM1, CTM2}. In these papers, time-varying traffic demand patterns were incorporated by adopting dynamic signal timing plans.  Such methods were later extended in \cite{Lin and Wang} to capture more realistic features of signalized junctions such as the total number of vehicle stops and signal preemption in the presence of emergency vehicles. Building upon this solid foundation that exists in the literature, this MIP approach will be adopted here.

In most works, the model of traffic dynamics is taken as fixed, which is reasonable for models that accurately describe the critical phenomena observed. In this paper, we consider the Lighthill-Whitham-Richards model \citep{Lighthill and Whitham, Richards}, also known as kinematic wave theory, to describe traffic dynamics on individual links and through signalized junctions. This well-known model is employed as it is one of the most used and trusted traffic flow models currently being used in network optimization procedures today \citep{Aziz, CP, HGPFY, Lin and Wang, LHGFY, Lo 1999, Lo 1999b,  Zhang}. In particular, we employ a {\it link-based kinematic wave model} (LKWM) proposed in  \cite{LKWM}  to capture queue dynamics, shock waves and vehicle spillback, while integrating it with signalized junction models.  In contrast to the cell-based math programming approaches reviewed above, the link-based approach requires fewer spatial variables and eliminates the problem of {\it traffic holding} that arises between two adjacent cells \citep{Z} without using additional binary variables \citep{Lo 1999a}. In this way, the resulting mathematical program is more computationally efficient than the more traditional cell-based approach.

As for objective functions, the majority of adaptive traffic signal control schemes update signal timings to minimize total vehicular delays. Representatives of such signal-control systems are OPAC \citep{Gartner}, RHODES \citep{MH}, SCAT \citep{SD} and SCOOT \citep{HRBR}. Other control strategies seek to minimize delays to a subset of vehicles; e.g., the goal of transit signal priority strategies is to reduce delays for transit vehicles, often to the detriment of those remaining, see \cite{Skabardonis}. More recently, a transit signal priority strategy was proposed to minimize total person delay, which essentially considers a weighted average of vehicular delay using the passenger occupancies of each vehicle as the weights \citep{Christofa}. 

Relatively less attention has been given to vehicular emissions in the optimization of traffic signal timings. The earliest study that includes emissions in signal timing optimization appears to be \cite{Robertson}, but this work relies on macroscopic simulations that do not accurately account for vehicle dynamics at intersections. The efforts that followed either relied on combining detailed emissions models with outputs from microscopic simulations or models \citep{Stevanovic, Li, Lin, Lv} or macroscopic emissions models estimated from data \citep{Aziz, Zhang}. The former approach is more accurate, but relies on computationally intensive simulation-based optimization methods. The latter is useful but as pointed out by a survey paper \citep{Szeto} and the literature therein, the environmental considerations typically result in highly nonlinear and nonconvex constraints and objective functions in the mathematical programming formulation, which also imposes tremendous computational burdens. As a result, heuristic methods, such as one found in \cite{Ferrari, Zhang}, have been used to solve these types of problems. Classical methods such as the inner penalty technique \citep{Yang and Bell} and augmented Lagrangian multiplier technique \citep{YXHM} have also been used, but well-defined exact approaches that account for these non-linear and (potentially) stochastic relationships currently do not exist.

This paper presents a novel approach to circumvent the aforementioned computational challenges by combining traditional objective functions (i.e., minimizing vehicular delays or maximizing vehicle throughput) with emissions considerations. The latter are incorporated using constraints in the optimization procedure (e.g., maximizing throughput subject to some emissions standard that must be met) and these are reformulated as linear functions through the use of  numerical experimentation and robust optimization. This method is made possible by leveraging observed relationships between aggregated emissions rates and vehicles occupancies on a link that arise when certain macroscopic or mesoscopic emission models are employed (e.g., see \citet{Shabihkhani}). Such empirical observations are supported by extensive numerical simulations, as we shall demonstrate below. Detailed description of the simulation and synthetic data is presented in Section \ref{secSim}.

Unfortunately, despite the strong correlation between the aggregated emission rate and certain macroscopic traffic quantities (e.g. link occupancy), there are non-negligible errors associated with such approximation.  Of course, errors and perturbations to a deterministic model can render an optimal solution in the ideal case suboptimal in implementation. A natural approach  to capture uncertainty is by assuming that unknown parameters follow certain probability distributions and by employing the notions and methodologies in stochastic programming. However, such an approach has two main limitations: 1) exact knowledge of error distributions is often difficult to acquire, and 2) stochastic programming is recognized as highly intractable to solve even with linear objective function and linear constraint functions. In view of these challenges, we propose to handle uncertainty in the perspective of robust optimization.

A robust optimization is a distribution-free uncertainty set approach that seeks to minimize the worst-case cost and/or to remain feasible in the worst scenario. Compared to stochastic programming, robust optimization makes no assumption on the underlying distribution of uncertain parameters. Moreover, it has been shown to work as a powerful approximation to stochastic programming and even probabilistic models with significantly reduced computational cost \citep{bn1998, bn1999, bn2000, Bertsimas etal2011a, Bertsimas etal2011b, Bandi, Rikun}. Although solutions to robust optimization problems can be relatively conservative, the conservatism is adjustable with the flexibility of choosing uncertainty sets \citep{bersim2004}. A comprehensive review of robust optimization is provided by \cite{Bertsimas etal2011a}.

Due to the nonlinear and nonconvex nature of emission-related constraints and/or objectives, signal optimization problems with emission considerations are very difficult to solve when formulated as mathematical programs. This paper proposes a practical and effective way to reformulate this problem by invoking a robust optimization approach based on macroscopic emission models. Through numerical simulations we uncover well-defined macroscopic relationships between the link aggregate emission rate and the link occupancy, which is then utilized to re-formulate emission-rated constraints/objectives into mathematically tractable forms. We show that a fair general class of emission-related constraints and objectives can be re-formulated as linear constraints with the utilization of dual variables. Effectively, the signal optimization problem with emission considerations are formulated as mixed integer linear programs (MILPs). These MILPs not only capture realistic traffic dynamics that exist on signalized networks such as shock waves and car spillback, but also address nonlinear and nonconvex emissions constraints/objectives in a mathematically tractable way. Moreover, they can be solved by commercial solvers fairly efficiently in a nearly mathematically tractable way \citep{BCR}. The proposed solution method is tested using a synthetic experiment to demonstrate its performance.

The methodological framework proposed in this paper is potentially transferrable to traffic control problems formulated as mathematical programs, in which vehicle emission, fuel consumption, or safety are within the purview of the traffic operator. For example, vehicle fuel consumption can be modeled in a similar way as emission based on various operational modes of a moving car such as cruise, acceleration, deceleration, and idle \citep{Barth}. On the other hand, traffic risk index has been derived based on statistical analysis of historical information on accidents and macroscopic traffic data including traffic flow, occupancy and speed  \citep{HRL}. These problems may be similarly formulated as MILPs if the underlying macroscopic relationship can be approximated by piecewise affine functions.

The rest of this paper is organized as follows.  Section \ref{secLWRMILP} presents a mixed integer linear programming formulation of network signal optimization problems, without any emission considerations. In Section \ref{secSim}, we present details of the numerical simulations that uncover the macroscopic relationships between the aggregated emission rate and the link occupancy.  In  Section \ref{secRO} we utilize the findings made in Section \ref{secSim} to systematically and explicitly derive the robust counterpart of the LWR-E problem based on fairly general assumptions made on the macroscopic relationship. Section \ref{secgeneralization} discusses two generalizations of the LWR-E problem.   Section \ref{secNE} presents a numerical study of the proposed formulation. Finally, Section \ref{secConclusion} provides some concluding remarks.

\section{MILP approach for signal optimization}\label{secLWRMILP}
This section presents the mixed integer linear program (MILP) formulation of the dynamic traffic signal control problem without any consideration for emission, while Sections \ref{secRO} and \ref{secgeneralization} will address emission-related constraints/objectives in detail. As we mentioned in the introduction, a link-based LWR model will be considered, the derivation of which employs the variational method \citep{VT1}. For the conciseness of our presentation, we only recap the key results below and refer the reader to \cite{LKWM} for a detailed derivation and analysis.

The LWR model is based on the following kinematic wave equation:
\begin{equation}\label{LWRPDE}
\partial_t\rho(t,\,x)+\partial_x f\big(\rho(t,\,x))~=~0
\end{equation}
\noindent where $\rho(t,\,x)$ denotes the vehicle density in a spatial-temporal domain; $f(\cdot)$ is the fundamental diagram that describes the macroscopic relationship between vehicle density and flow on the link. Throughout this paper, $f(\cdot)$  is assumed to be triangular of the following form \footnote{The triangular FD is the key to a much simplified variational representation of the solution and to the link-based LWR model considered in this paper.}: 
\begin{equation}
f(\rho)~=~
\begin{cases}
v\rho \qquad & \rho\in[0,\,\rho^c]
\\
-w(\rho-\rho^{jam})\qquad & \rho\in(\rho^c,\,\rho^{jam}]
\end{cases}
\end{equation}
\noindent where $\rho$ denotes vehicle density; $v$ and $w$ are respectively the speeds of the forward- and backward-propagating kinematic waves; $\rho^c$ is the critical density at which the flow is maximized; and $\rho^{jam}$ denotes the jam density. Moreover, we let $C$ be the flow capacity and $L$ be the length of the link. We allow all these quantities and variables to depend on a specific link $I_i$, and will always use subscript `$i$' to indicate such a dependence.

We consider an link expressed as a spatial interval $[a,\,b]$, and ignore for now the subscript `$i$' for notation convenience. We define a binary variable $\bar r(t)$, which indicates the traffic state at the entrance of the link $x=a$: $\bar r(t)=0$ if traffic is free flow and $\bar r(t)=1$ if traffic is congested. A similar notation $\hat r(t)$ is used for the exit of the link. We also define the link inflow $\bar q(t)$ and the link exit flow $\hat q(t)$. The key results of the link-based kinematic wave model, derived from the variational theory, are as follows. 
\begin{align}
\label{latent3}
\bar r(t)&=\begin{cases}
 1,\quad &\hbox{if}\quad \int_0^t \bar q(\tau)\,d\tau ~=~ \int_0^{t-{L\over w}}\hat q(\tau)\,d\tau +\rho^{jam}L\\
0, \quad  &\hbox{if}  \quad  \int_0^t \bar q(\tau)\,d\tau ~<~ \int_0^{t-{L\over w}}\hat q(\tau)\,d\tau +\rho^{jam}L
\end{cases}
\\
\label{latent4}
\hat r(t)&=\begin{cases}
 0,\quad &\hbox{if}\quad \int_0^{t-{L\over v}}\bar q(\tau)\,d\tau ~=~ \int_0^t\hat q(\tau)\,d\tau\\
1, \quad  &\hbox{if}\quad \int_0^{t-{L\over v}}\bar q(\tau)\,d\tau ~>~ \int_0^t\hat q(\tau)\,d\tau
\end{cases}
\end{align}
\noindent In addition, we introduce the notions of {\it demand} and {\it supply} of a link  \citep{LK1999} which, under the assumption of a triangular fundamental diagram, reduce to the following:
\begin{align}
\label{demanddef}
D(t)=\begin{cases}
C ~~  & \hbox{if}~~\hat r(t)= 1
\\
\bar q\left(t- {L\over v}\right)~~ & \hbox{if}~~\hat r(t)=0
\end{cases}
\qquad\qquad 
S(t)=\begin{cases}
C ~~  & \hbox{if}~~\bar r(t)=0
\\
\hat q\left(t-{L\over w} \right)~~ & \hbox{if}~~\bar r(t)=1
\end{cases}
\end{align}

\subsection{Discrete-time formulation of the link dynamic}\label{seclinkdt}
Let us introduce some key discrete-time notations employed in this paper, where the subscript `$i$' indicates association with link $I_i$, and the superscript `$k$' indicates the k-th time interval.

\begin{framed}
\vspace{-0.15 in}
\begin{itemize}

    \item[]{\makebox[1.5cm]{$\bar q_i^k$\hfill}  the flow at which vehicles enter link $I_i$;} 
\vspace{-0.06 in}
    \item[]{\makebox[1.5cm]{$\hat q_i^k$\hfill}  the flow at which vehicles exit link $I_i$;}
    \vspace{-0.06 in}
       \item[]{\makebox[1.5cm]{$\bar r_{i}^k $\hfill}  the binary variable indicating the traffic state at the entrance of $I_i$;} 
\vspace{-0.06 in}
    \item[]{\makebox[1.5cm]{$\hat r_{i}^k$\hfill}  the binary variable indicating the traffic state at the exit of $I_i$;}
    \vspace{-0.06 in}
       \item[]{\makebox[1.5cm]{$S_i^k$\hfill}  the supply of link $I_i$;} 
\vspace{-0.06 in}
    \item[]{\makebox[1.5cm]{$D_i^k$\hfill}  the demand of link $I_i$;}
    \vspace{-0.06 in}
       \item[]{\makebox[1.5cm]{$u_i^k$\hfill} the binary signal control variable for link $I_i$;} 
    
\end{itemize}
\vspace{-0.15 in}
\end{framed}

\noindent For a fixed time step size $\delta t$, we define $\Delta^f_i\doteq \left[{L_i\over v_i\delta t}\right]$, $\Delta^b_i\doteq \left[{L_i\over w_i\delta t}\right]$, where $[x]$ rounds the real number $x$ to the nearest integer \footnote{Rounding these quantities to integers lead to certain numerical errors due to time discretization. The impact of such an approximation (rounding effect) on the solution quality has been investigated in \cite{CSC}.}. We are now ready to state the discrete versions of \eqref{latent3}-\eqref{latent4} as follows.

\begin{align}
\label{panding1}
&\begin{cases}
\displaystyle \delta t\sum_{k=1}^{l-\Delta^b_i} \hat q_i^k -\delta t\sum_{k=1}^{l}\bar q_i^k+\rho^{jam}_iL_i~\leq~\mathcal{M}\,(1-\bar r_i^l)+\varepsilon
\\
\displaystyle \delta t \sum_{k=1}^{l-\Delta^b_i} \hat q_i^k - \delta t \sum_{k=1}^{l}\bar q_i^k+\rho^{jam}_iL_i~>~-\mathcal{M}\,\bar r_i^l +\varepsilon
\end{cases}\qquad\forall i,~~\forall l
\\
\label{panding2}
&\begin{cases}
\displaystyle \delta t \sum_{k=1}^{l-\Delta^f_i} \bar q_i^k -\delta t\sum_{k=1}^{l}\hat q_i^k~\leq~\mathcal{M}\,\hat r_i^l+\varepsilon
\\
\displaystyle \delta t \sum_{k=1}^{l-\Delta^f_i} \bar q_i^k -\delta t \sum_{k=1}^{l}\hat q_i^k~>~\mathcal{M}\,(\hat r_i^l-1)+\varepsilon
\end{cases}\qquad \forall i,~~\forall l
\end{align}
where $\rho^{jam}_i$ and $L_i$ denote respectively the jam density and length of link $I_i$. $\mathcal{M}>0$ is a large constant, and $\varepsilon>0$ is a small constant serving as a cut-off threshold.  Moreover, the demand $D_i^k$ and the supply $S_i^k$, whose continuous-time expressions are given by \eqref{demanddef}, are determined via the following inequalities, where $C_i$ denotes the flow capacity of link $I_i$:
\begin{equation}\label{inqdemand}
\begin{cases}
C_i+\mathcal{M}(\hat r_i^k-1)~\leq~D_i^k~\leq~C_i
\\
\bar q_i^{k-\Delta^f_i}-\mathcal{M}\hat r_i^k~\leq~D_i^k~\leq~\bar q_i^{k-\Delta_i^f}+\mathcal{M}\hat r_i^k 
\end{cases} \qquad\forall i,~~\forall k
\end{equation}
\begin{equation}\label{inqsupply}
\begin{cases}
C_i-\mathcal{M}\bar r_i^k~\leq~S_i^k~\leq~C_i
\\
\hat q_i^{k-\Delta^b_i} + \mathcal{M}(\bar r_i^k-1)~\leq~S_i^k~\leq~\hat q_i^{k-\Delta_i^b}- \mathcal{M}(\bar r_i^k-1) 
\end{cases} \qquad\forall i,~~\forall k
\end{equation}

\subsection{Dynamics at signalized junctions}\label{secAppsj}
In general, signalized junction models vary according to detailed intersection geometry and phasing schemes. For simplicity yet without loss of generality, we consider in this paper the simple junction shown in Figure \ref{figintersection} while referring the reader to \cite{HG} for an elaborated treatment of detailed junction layout, vehicle movements and multiple signal phases. 
\begin{figure}[h!]
\centering
\includegraphics[width=.4\textwidth]{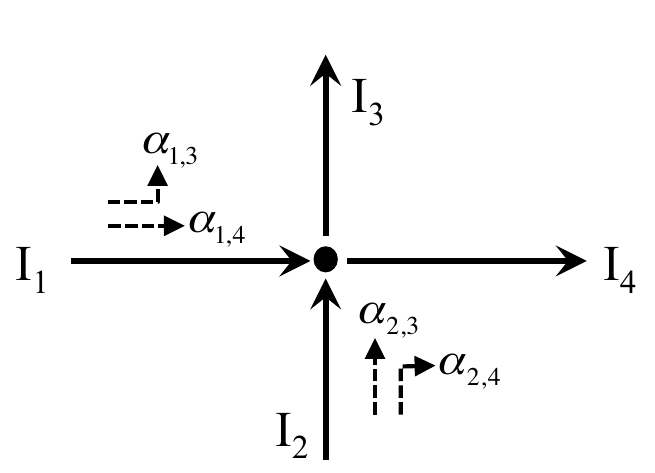}
\caption{Two signalized junctions}
\label{figintersection}
\end{figure}

This junction has two incoming links ($I_1,\,I_2$) and two outgoing links ($I_3,\,I_4$). The vehicle turning percentages, $\alpha_{1,3},\,\alpha_{1,4},\,\alpha_{2,3},\,\alpha_{2,4}$ as shown in Figure \ref{figintersection}, are assumed to be known a priori, and can be usually estimated through historical turn-by-turn vehicle counts. The discrete-time junction dynamic may be written as 
$$
\hat q_i^k~=~\min\left\{  D_i^k~,~ u_i^k\cdot \min\Big\{{S_3^k\over \alpha_{i,3}}~,~{S_4^k\over \alpha_{i,4}}\Big\} \right\}\qquad i=1,\,2,\quad\forall k
$$
$$
\bar q_j^k~=~\alpha_{1,j}\hat q_1^k + \alpha_{2,j}\hat q_2^k \qquad j=3,\,4,\quad\forall k
$$
\begin{remark}
In the above expression, if certain vehicle turning percentage, say $\alpha_{i,3}$, is zero, then the term ${S_3^k\over \alpha_{i,3}}$ is infinity and will be dropped from the ``min" operator. This corresponds to the situation where vehicles discharged from $I_i$ do not enter the downstream link $I_3$, and thus $I_3$ is effectively removed from the junction as far as vehicles from $I_i$ are concerned. Thus, for what follows we always assume positive vehicle turning percentages. Moreover, although we assume here that these percentages are constants, it is a trivial extension to allow them to depend on time (`$k$'). 
\end{remark}
\noindent For $i=1,\,2$, let us define $\zeta_i^k=\min\left\{D_i^k~,~{S_3^k\over \alpha_{i,3}}~,~{S_4^k\over \alpha_{i,4}}\right\}$; then we have $\hat q_i^k~=~u_i^k\cdot \zeta_i^k$, which may be expressed as linear constraints: 
\begin{equation}\label{dtcpeqn5}
\begin{cases}
0~\leq~\hat q_i^k~\leq~\mathcal{M}\,u_i^k\\
\zeta_i^k+\mathcal{M}\,(u_i^k-1)~\leq~\hat q_i^k~\leq~\zeta_i^k
\end{cases}\qquad i=1,\,2,\quad\forall k
\end{equation}
Moreover, $\zeta_i^k$, which is the minimum of three endogenous variables, may be expressed as linear constraints using two additional binary variables, $\xi_i^k$ and $\eta_i^k\in\{0,\,1\}$:
\begin{equation}
\begin{cases}
D_i^k - \mathcal{M}\eta_i^k~\leq~\zeta_i^k~\leq~D_i^k
\\
{S_3^k\over \alpha_{i,3}}-\mathcal{M}\xi_i^k-\mathcal{M}(1-\eta_i^k)~\leq~\zeta_i^k~\leq~{S_3^k\over \alpha_{i,3}}
\\
{S_4^k\over \alpha_{i,4}}-\mathcal{M}(1-\xi_i^k)-\mathcal{M}(1-\eta_i^k)~\leq~\zeta_i^k~\leq~{S_4^k\over \alpha_{i,4}}
\end{cases}\qquad i=1,\,2\qquad \forall k
\end{equation}
It is easy to check that $\zeta_i^k=D_i^k$ when $\eta_i^k=0$; $\zeta_i^k={S_3^k\over\alpha_{i,3}}$ when $\eta_i^k=1$ and $\xi_i^k=0$; $\zeta_i^k={S_4^k\over\alpha_{i,4}}$ when $\eta_i^k=1$ and $\xi_i^k=1$.
Finally, to prevent conflicting traffic streams to be discharged at the same time, we stipulate that 
\begin{equation}\label{uconst}
u_1^k+u_2^k~=~1\qquad \forall k
\end{equation}
Regarding the objective function one has a lot of flexibility in selecting its form as long as linearity is maintained. The following linear form is selected in this paper. 
\begin{equation}\label{mipobj}
\hbox{max}~~\sum_{k=1}^M {1\over k}\sum_{I_i\in \mathcal{I}}\hat q_i^k 
\end{equation}
where $M$ is the total number of time intervals, and $\mathcal{I}$ is a prescribed set of links. For example, $\mathcal{I}$ may be selected to be the set of outgoing links of the network of interest; or $\mathcal{I}$ may be specified as the set of links that one wishes to prioritize in a congested network. Choosing such an objective function ensures that the throughputs on these links are maximized at any instance of time. Again, we emphasize that any type of linear objective function can be selected, depending on the specific application, without affect the MILP formulation.  In summary, the proposed MILP consists of the objective function \eqref{mipobj} and constraints \eqref{panding1} through \eqref{uconst}.

\begin{remark}
No explicit constraints besides \eqref{uconst} are made on the signal control binary variables, which allows the signals to operate using splits and cycle lengths that change dynamically during the course of the control period. In this way, the signal splits and cycle lengths will likely vary with time as traffic flow patterns change. Note that this is the most flexible strategy, but might not be the most realistic in some networks. The methodology can be easily extended to more restrictive but realistic scenarios (e.g., fixed-cycle-and-split, fixed-cycle-dynamic-split, dynamic-cycle-fixed-split, etc.) through the introduction of additional constraints on signal timing parameters. This will not affect the main formulation for minimizing delays subject to emissions constraints. 
\end{remark}

In the MILP formulation, the constant $\mathcal{M}$ can be chosen to be  
$$
\mathcal{M}~\geq~\max\left\{\max\{1,\,T\}\cdot \bar C~,~ {\bar C\over \bar \alpha}\right\}
$$
where $T$ is the length of the time horizon, $\bar C$ is the maximum link flow capacity in the network, $\bar\alpha$ is the smallest vehicle turning percentage in the network (notice that all turning percentages are assumed positive). The cut-off threshold $\varepsilon$ in \eqref{panding1}-\eqref{panding2} should be no greater than $\delta t\cdot \min_i C_i$.

\section{Macroscopic relationship between the emission rate and traffic quantities}\label{secSim}

 As mentioned earlier in the introduction, the key ingredient of the proposed MILP formulation for the LWR-E problem is the existence of a macroscopic relationship between the link's aggregate emission rate (AER) and its vehicle occupancy. In this section we will investigate such a relationship through analytical computations. This requires the modeling of vehicle movements within a link subject to signal controls, and a vehicle emission model that calculates the AER in a way consistent with the vehicle flow dynamics. More specifically, we employ the LWR model with a triangular fundamental diagram to predict and describe the evolution of vehicle density (see Section \ref{secLWRMILP}); on the emission side a modal emission model is considered and detailed below in Section \ref{secmodal}.

\subsection{Simulation setup}\label{subsecsimsetup}

The hypothesized macroscopic relationship is investigated through a battery of simulations that employ the aforementioned LWR model and emission model. The simulations are conducted as follows. We consider an arbitrary link with given length and triangular fundamental diagram. In order to account for various levels of congestion, we randomly generate the demand and supply profiles at the upstream and downstream ends of the link, respectively. The values of the demand and supply are uniformly distributed between zero and the link flow capacity. Moreover, in order to incorporate the effect of the signal timing, we randomly generate sequences of green and red phases to control the link's discharge flow.  Such variability in the signal controls is crucial in the simulation since the resulting macroscopic relationship (if any) will be used across all possible scenarios involving different signal control parameters. With given signal control and upstream/downstream demand profiles, we solve the LWR PDE \eqref{LWRPDE} by applying the variational method \citep{LKWM}. Alternatively, one may solve the same PDE  using the Godunov scheme \citep{Godunov} or the cell transmission model \citep{CTM1}. Once vehicle densities are available in discrete space and time, we then apply a specific emission model to compute the total emission rate and the link occupancy $N(t)$, which is expressed as
\begin{equation}\label{occupancy}
N(t)~=~\int_a^b \rho(t,\,x)dx \qquad \hbox{or}\qquad N(t)~=~\int_0^t \bar q(\tau)d\tau-\int_0^t \hat q(\tau)d\tau
\end{equation}
where the link of interest is expressed as the spatial interval $[a,\,b]$; $\rho(t,\,x)$ is the solution of the LWR PDE \eqref{LWRPDE}, and $\bar q(\cdot)$ and $\hat q(\cdot)$ are the link inflow and exit flow, respectively.

In the next two subsections, we will present in detail the two emission models and the resulting macroscopic relationship obtained from the simulation. The following link parameters are employed in our simulation. 
$$
v=40/3 ~\hbox{meter/s},\quad w=40/9~\hbox{meter/s},\quad \rho^{jam}=0.4~\hbox{vehicle/meter},\quad C=4/3~\hbox{vehicle/second}
$$
where $v$ and $w$ are the forward and backward wave speeds respectively, $\rho^{jam}$ denotes the jam density, and $C$ is the flow capacity. The link length is set to be $L=400$ meters. Note should be taken on the following fact: the macroscopic relationship, as well as the uncertainty set calibrated for the robust optimization presented later, depend on  link-specific parameters such as the fundamental diagram and link length. However, the most relevant fact that we rely upon here is that the relationship or the uncertainty region holds for a range of endogenous factors; this includes: vehicle arrival and discharge rate for the link (both of which are accounted for within our simulation) and most importantly the signal timing at the intersection. The latter fact is especially useful as it allows us to use these functions to derive optimal signal timing plans that both minimize delay and account for constraints in emissions within the network or on individual links.

It should be noted that although this paper considers one specific emission model concerning hydrocarbon due to space limitation, the proposed analytical framework can potentially accommodate a wider range of emission models that are similarly based on vehicle speed, acceleration and deceleration.

\subsection{The modal emission model}\label{secmodal}

We consider a modal emission model, which is based on the operational modes of a vehicle, including idle, steady-state cruise, acceleration and deceleration. This model relies on vehicle trajectories estimated from the LWR model. More specifically, let $\rho(t,\,x)$ be the  solution of the LWR equation \eqref{LWRPDE} on the link of interest; the vehicle speed $v(t,\,x)$  is computed as 
\begin{equation}\label{vcalc}
v(t,\,x)~=~f(\rho(t,\,x))/\rho(t,\,x)
\end{equation} 
The acceleration/deceleration, $a(t,\,x)$, viewed as the derivative of the speed along the trajectories of moving vehicle, is computed as the material derivative in the Eulerian coordinates:
$$
a(t,\,x)~=~{D\over Dt}v(t,\,x)~=~\partial_t v(t,\,x)+v(t,\,x)\cdot\partial_x v(t,\,x)
$$
Since the quantities $\rho(t,\,x)$ and $v(t,\,x)$ are in general non-differentiable, they are approximated in discrete time using finite differences. Let $\{t_i\}$ and $\{x_j\}$ be discrete temporal and spatial grid points, then we have 
\begin{equation}\label{modala}
a(t_i,\,x_j)~=~{v(t_{i+1},\,x_{j})-v(t_{i-1},\, x_j)\over 2 \delta t} + v(t_i,\,x_j)\cdot {v(t_i,\,x_{j+1}) -v(t_i,\,x_{j-1})\over 2\delta x}
\end{equation}
where $\delta t$ and $\delta x$ are the time and spatial steps, respectively.

Following the power-demand emission model proposed by \cite{Post}, the overall instantaneous total power demand $Z$ (in kilowatt) for a vehicle with mass $m$ in (kilogram) is given by 
\begin{equation}\label{ztot}
Z~=~(0.04\,v+0.5\times 10^{-3}v^2+10.8\times 10^{-6}v^3)+{m\over 1000}{v\over 3.6}\left({a\over 3.6}+9.81\sin\theta\right)
\end{equation}
where the above quantity is in kilowatts and $\theta$ denotes the road grade. The reader is also referred to \cite{Barth} for an alternative description of the power demand function based on velocity and acceleration. \cite{Post} also propose the following model of hydrocarbon emissions rate for vehicles based on field experiments:
\begin{equation}\label{modalr}
r(t)~=~\begin{cases}
52.8+ 4.2Z \qquad & Z~>~0
\\
52.8 \qquad & Z~\leq~0
\end{cases}
\end{equation}
where the emission rate $r(t)$ is in grams/hour, and $Z$ is in kilowatt. Following the previous discussion, we now compute the aggregate emission rate (AER) in discrete time as
$$
AER(t_i)~=~\delta x\sum_{j}\rho(t_i,\,x_j) r(t_i,\,x_j)
$$
where $r(t_i,\,x_j)$ is the emission rate of a vehicle corresponding to the $j$-th cell at time $t_i$, calculated using \eqref{vcalc}-\eqref{modalr}.  The resulting scatter plot of the link occupancy (LO) vs. the aggregate emission rate (AER) is shown in Figure \ref{figModal_emission}, which is based on 42,000 simulation runs. From this figure we  can observe a well-defined macroscopic relationship between these two quantities, although the points are relatively scattered, indicating potential errors associated with such an approximation. Overall, Figure \ref{figModal_emission} shows a positive correlation between the AER and the LO. This can be intuitively explained using the following two observations. (1) The total emission rate on the link level is in general expected to show a growing trend with an increased number of vehicles on this link. Although vehicle dynamics do matter in this case and will play a role to some extent, they are not significant enough to overturn this trend, at least not for the emission model considered by this paper. (2) When the link is controlled by a signal light, a higher link occupancy indicates more severe congestion and a longer queue. As a result more vehicle stop-and-go movements are expected to occur and contribute to the total emission amount.
\begin{figure}[h!]
\centering
\includegraphics[width=.65\textwidth]{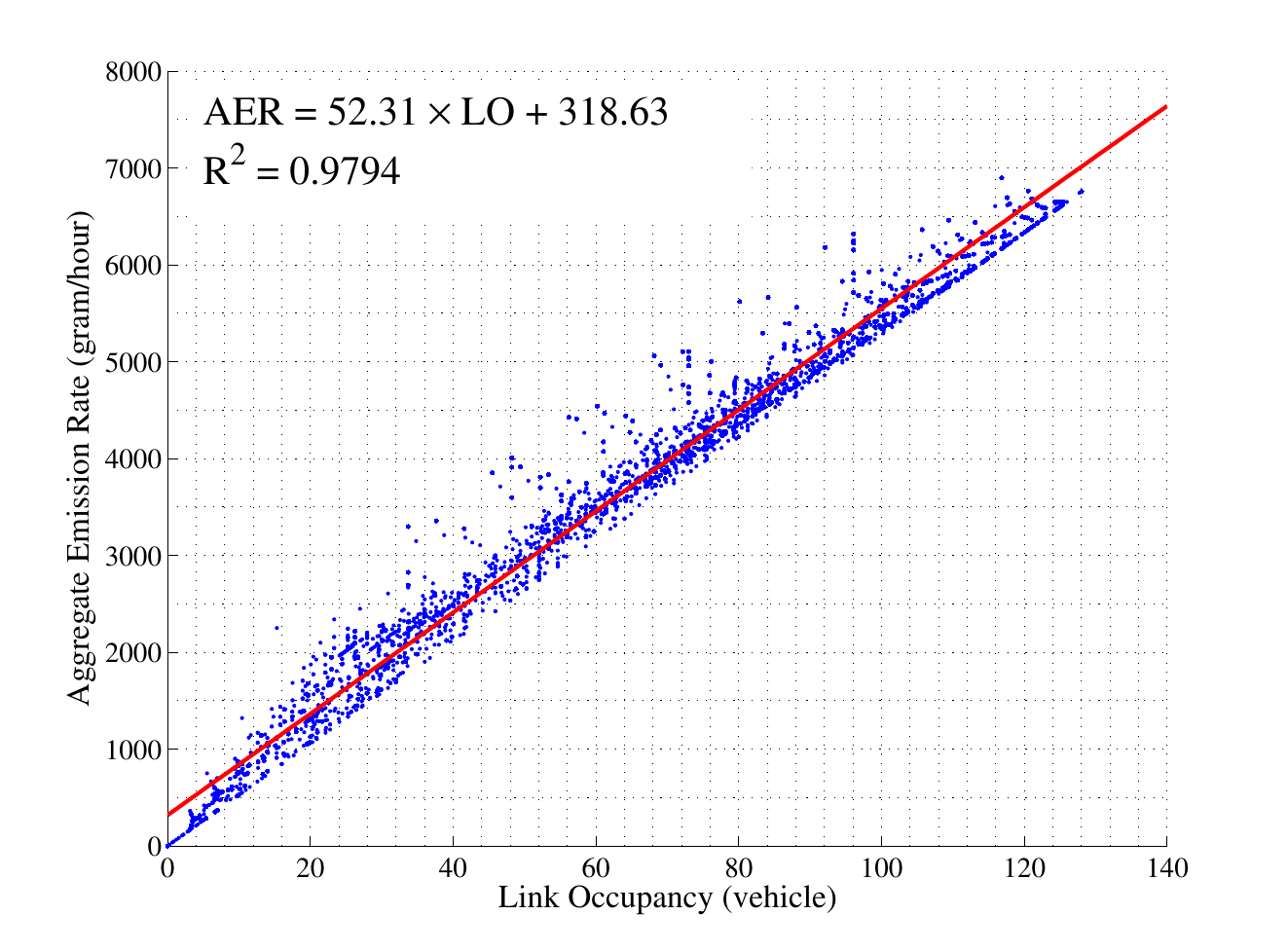}
\caption{Scatter plot of link occupancy vs. AER.}
\label{figModal_emission}
\end{figure}

The observed relationship between LO and AER may be approximated in a number of ways, e.g. using linear, piecewise linear, or piecewise smooth curve fittings. In Figure \ref{figModal_emission} we show the fitting result using linear approximation, with $R^2=0.9794$. We note, however, that this does not mean that the linear fit is the best choice; and other function forms may be more appropriate. However, there is a trade-off between the sophistication/goodness of the curve fitting and the simplicity and computational tractability of the resulting optimization formulation, as we subsequently show in Section \ref{secRO}. Such a trade-off needs to be taken into account when one formulates the uncertainty set based on curve fitting.

Some of the significant deviations from the linear fit, as shown in the figure, are caused by the highly nonlinear vehicle dynamics (i.e. acceleration and deceleration) predicted by the LWR model under the control of signal light. More specifically, close to a signalized intersection the shock waves and vehicle stop-and-go movements influenced by the signal timings generate vehicle acceleration/deceleration profiles that significantly contribute to the emission of pollutants. Moreover, the emission amount are related to the actual spatial configuration of vehicle densities and speeds on the link, which cannot be adequately captured by the link occupancy alone.

In order to show that similar macroscopic relationship exists for some other choices of link parameters, say length, we show in Figure \ref{figtwoemission} two additional simulation results for $L=200$ meters and $L=800$ meters (the result shown in Figure \ref{figModal_emission} is based on $L=400$ meters). Although similar macroscopic relationships exist in these two additional cases, the uncertainty set needs to be calibrated separately.

\begin{figure}[h!]
\centering
\begin{minipage}[c]{0.49\textwidth}
\centering
\includegraphics[width=\textwidth]{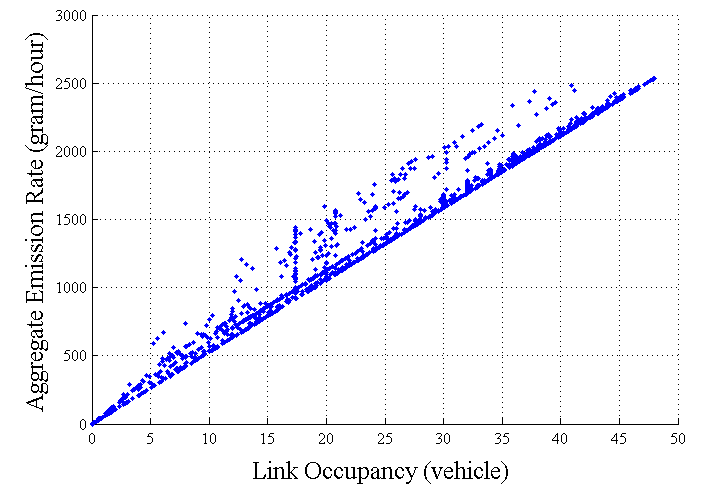}
\end{minipage}
\begin{minipage}[c]{0.49\textwidth}
\centering
\includegraphics[width=\textwidth]{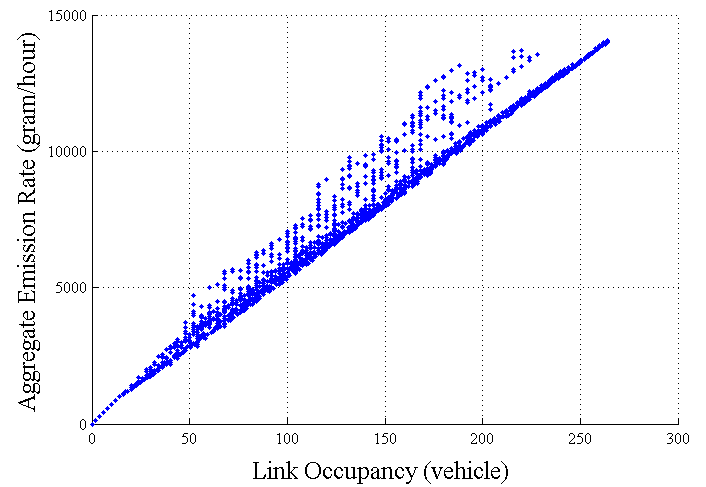}
\end{minipage}
\caption{Scatter plots of link occupancy vs. AER. Left: $L=200$ meters; right: $L=800$ meters.}
\label{figtwoemission}
\end{figure}

The macroscopic relationships depicted in Figure \ref{figModal_emission} and Figure \ref{figtwoemission} are quite interesting from a robust optimization perspective. The reasons is that although the points are spread out  in certain region, they are sparse in some places while dense in some other regions. An uncertainty region that simply covers all these points may be too conservative; and a more effective approach requires a more refined and sophisticated calibration of the uncertainty set. Detailed calibration of the uncertainty set associated with Figure \ref{figModal_emission} will be presented in Section \ref{subsecnummacro}.

\section{Signal control with emission constraints}\label{secRO}

This section provides a general mathematical framework for incorporating emission-related side constraints, while relying on fairly general assumptions on the macroscopic relationship between the aggregate emission rate and the link occupancy. We also present explicit reformulations of the emission side constraints for three special cases, namely, when the relationship is (1) affine; (2) convex piecewise affine; and (3) concave piecewise affine. Notably, all the three reformulations lead to linear constraints, which, when combined with the signal control formulation presented earlier, do not alter the nature of the mixed integer linear program.

We propose a set of emission-related  constraints constructed in a data-driven manner, which employs robust optimization techniques to handle prediction errors arising from the macroscopic relationship. In addition, we show that when this relationship is either affine or convex/concave piecewise affine, the resulting emission-related  side constraints are still linear.

For an arbitrary link in the network, we let $N(t)$ be the occupancy of this link at time $t$. Here, for notation convenience the subscript `$i$' indicating the link ID is dropped in this section.  The time-varying aggregate emission rate (AER) on this link is denoted $AER(t)$. In the following derivation of emission constraints we assume a general polynomial form relating $N(t)$ to $AER(t)$.  As a result,  the proposed framework can handle a wide range of  macroscopic relationships.

We assume the macroscopic relationship is approximated by a polynomial with degree ${\mathcal L}$, where the approximation is based on regression analysis or other types of curve-fitting techniques:
\begin{equation}\label{polyrel}
AER\big(N(t);\,\boldsymbol a\big)~\doteq~ \sum_{l=0}^{\mathcal L} a_ l(N(t))^ l~=~\boldsymbol a^T \boldsymbol N(t)\qquad  \forall t\in[0,\,T]
\end{equation}
\noindent Here $\boldsymbol  a\doteq (a_0,\,a_1,\,\ldots,\,a_{\mathcal{L}})^T\in\mathbb{R}^{\mathcal{L}}$ is a vector of coefficients in the polynomial, and $\boldsymbol N(t)\doteq \left((N(t))^0,\,\ldots,\,(N(t))^{\mathcal{L}}\right)^T$ is a vector-valued function of time. With  these notations, we consider the first type of side constraint, which stipulates that the total emission amount at the subject link is constrained by a prescribed level $E$.
 \begin{equation}\label{deterministic constraint} 
 \int_0^TAER\big(N(t);\,\boldsymbol a \big)\,dt~\leq~ E\qquad\forall t\in[0,\,T]
 \end{equation}

Notice that, for now, we are simply replacing the macroscopic relationship with a polynomial function without any consideration of errors associated with this  approximation. Later in the next subsection, we will take into account approximation errors and handle them using robust optimization techniques.

Side constraints of the form \eqref{deterministic constraint} can be easily generalized to address other types of environmental considerations including the following.
\begin{itemize}
\item The total emission on a subset of the network is bounded by some given value.
\item The total emission on a subset of the network is minimized.
\item  The differences among the total emissions on a subset of links are bounded and/or minimized.
\end{itemize}
\noindent The first case is a trivial extension of the constraint \eqref{deterministic constraint}, and will not be elaborated in this paper. The second case can be easily handled by invoking the ``epigraph reformulation''. The third case ensures that no link (or nearby environment) suffers much more than other parts of the network in terms of pollutant emission. This issue is identified by \cite{BR} as {\it environmental equity}. Detailed treatment of the last two environmental considerations will be presented in Section \ref{secgeneralization}.

\subsection{Emissions side constraints: A general formulation based on robust optimization}\label{subsecesc}
Inequality (\ref{deterministic constraint}) involves a polynomial approximation of the relationship between the link occupancy and the aggregate emission rate. In order to ensure that the emission constraint is still satisfied in the presence of approximation errors, we consider the following constraint instead:
 \begin{equation}\label{robust constraint} 
 \int_0^TAER\big(N(t);\,\boldsymbol{a}(t)\big)\,dt~\leq~ E\qquad\forall \boldsymbol a(\cdot)\in{\boldsymbol\eta}_{a}
 \end{equation}
\noindent  where the coefficients in the polynomial are allowed to vary over time; that is, $\boldsymbol a(t)=(a_ l(t):0\leq  l\leq {\mathcal L})$, and 
 \begin{equation}\label{most abstract}
AER\big(N(t);\,\boldsymbol a(t)\big)~=~\sum_{l=0}^{\mathcal L} a_ l(t)(N(t))^ l
\end{equation}
 
 \noindent Moreover, ${\boldsymbol\eta}_{a}$ is specified as a budget-like uncertainty set, which is similar to what is proposed by \cite{muhong}:
\begin{equation}
{\boldsymbol\eta}_{a}~=~\left\{\boldsymbol a(\cdot):~L_{l}\leq a_{l}(t)\leq U_{ l},~\forall t\in[0,\,T],~\forall 0\leq  l\leq {\mathcal L}; ~\sum_{l=1}^{{\mathcal L}}\int_{0}^{T}{a_ l(t)}\,dt~\leq~ \frac{T\sum_{ l=1}^{\mathcal L} U_{ l}}{\sigma}\right\} \label{uncertainty set}
 \end{equation}
where $L_ l$ and $U_ l$, $0\leq  l\leq {\mathcal L}$, are respectively the lower and upper bounds of the corresponding coefficient; and $\sigma$, which is used to adjust the conservatism of the robust optimization, satisfies
\begin{equation}\label{sigmabound}
\sigma\in\left[1,~~{{\sum_{l=0}^{\mathcal L}U_l}  \over {\sum_{l=0}^{\mathcal L}L_l}} \right]
\end{equation}

\noindent In \eqref{sigmabound}, the upper bound on \(\sigma\) ensures that the uncertainty set expressed in \eqref{uncertainty set} is nonempty. This can be easily seen by manipulating the last inequality of \eqref{uncertainty set}:
$$
\sum_{l=1}^{\mathcal{L}}T\cdot L_l~\leq~\sum_{l=1}^{\mathcal{L}}\int_{0}^Ta_l(t)\,dt~\leq~{T\sum_{l=1}^{\mathcal{L}}U_l\over \sigma}
$$
Thus $\sigma$ must not exceed ${\sum_{l=1}^{\mathcal L}U_l}/{\sum_{l=1}^{\mathcal L}L_l}$.  The constraints described by \eqref{robust constraint}-\eqref{sigmabound} correspond to the notion of robust optimization in the sense that the emission amount on the link of interest is bounded from above with any possible realization of the parameter $\boldsymbol a(t)$.

\begin{remark}
By writing \eqref{uncertainty set} we have implicitly assumed that coefficients associated with the zeroth-order term are not correlated with the other coefficients (the summation starts from $l=1$ instead of $l=0$). This implicit assumption will result in a more risk-averse formulation. However, this assumption is easy to relax, and the consequent generalization of our formulation is a trivial extension.
\end{remark}

 The uncertainty set in \eqref{uncertainty set} consists of two types of constraints: (1) a box constraint prescribing the upper and lower bounds of the uncertain parameters; and (2) a constraint that stipulates the sums of uncertain coefficients to be bounded from above (the last constraint in \eqref{uncertainty set}). With just the first type of constraints, the RO will generate the most conservative solution by predicting that all the uncertain parameters are realized at the extreme case against the decision maker. However, such a worst case occurs only with a very low probability in a realistic system, and this conservative solution is most likely to compromise the performance of the resulting system. Use of the second type of constraint can reduce the conservatism by excluding some of the extreme and rare cases. 

The second type of constraint is made flexible by adjusting the value of $\sigma$. Specifically,  a higher value of $\sigma$ implies a smaller uncertainty set, which results  in solutions that are more risk-prone (less conservative). In the most conservative case, i.e. $\sigma=1$,  the last constraint in (\ref{uncertainty set}) is out of effect.   \cite{Bandi} and \cite{Bertsimas et al 2014} provide data-driven approaches based on probability theory and statistical tests to determine the parameterization of the uncertainty sets in accordance with the observed data. Those approaches provide theoretical guarantee for the  satisfaction  of constraints in a probabilistic sense.  We would like to further remark that the second type of constraint may also capture potential correlations among $a_l(t)$, $0\leq l\leq\mathcal{L}$. (In contrast, the current uncertainty set concerns only correlations among $a_l(t)$, $1\leq l\leq\mathcal{L}$.) However, due to  space limitation this aspect of research will not be elaborated in this paper.

\subsection{Time-discretization and explicit reformulation}\label{reformulation subsection}
Here, we adopt the same notation convention as in Section \ref{seclinkdt} by using superscript `$k$' to indicate association with the $k$-th discrete time step, $1\leq k\leq M$. The constraint \eqref{robust constraint} can be time-discretized into the following form, where $\delta t$ denotes the time step size. 
\begin{equation}\label{robust discrete}
\sum_{k=1}^M \sum_{ l=0}^{\mathcal L} a_{l}^k(N^{k})^ l\delta t~\leq~ E \qquad\forall \hat{\boldsymbol a}\in\hat {\boldsymbol\eta}_{a}
\end{equation}
where $a_{l}^k$ corresponds to $a_l(t_k)$ at the $k$-th time step, and  $\hat{\boldsymbol a}\doteq (a_{ l}^k: ~0\leq  l\leq {\mathcal L},~1\leq k\leq M)$. The discrete-time version of the  uncertainty set is
\begin{equation}
\hat {\boldsymbol\eta}_{a}=\left\{\hat{\boldsymbol a}:L_ l\leq a_{ l}^k\leq U_ l, ~\forall 1\leq k\leq M,~\forall 0\leq  l\leq {\mathcal L};~\sum_{k=1}^M\sum_{ l=1}^{\mathcal L} a_l^k\delta t\leq\frac{T\sum_{l=1}^{\mathcal L}U_ l}{\sigma}\right\}
\end{equation}
The constraint \eqref{robust discrete} is in fact a semi-infinite constraint with an infinite index set $\hat {\boldsymbol\eta}_{a}$, which means that the inequality needs to be satisfied for infinitely many $\hat{\boldsymbol a}$. This makes it not directly computable. The following theorem provides a computable reformulation of \eqref{robust discrete} using dual variables.
\begin{theorem}{\bf (Robust reformulation with polynomial relationship)}\label{theorem 1}
Let constants $L_l<U_l,\, 0\leq l\leq \mathcal L$ and $\sigma$ be given. If \(\hat{\boldsymbol\eta}_{a}\) has nonempty interior, the semi-infinite constraint (\ref{robust discrete}) is equivalent to the following set of constraints:
\begin{gather}
\sum_{ l=1}^{\mathcal L}\sum_{k=1}^M \beta_{l}^kU_{ l}-\sum_{l=1}^{\mathcal L}\sum_{k=1}^M \gamma_{ l}^kL_{ l}+\frac{M\sum^{\mathcal L}_{l=1}U_ l}{\sigma}\,\theta+ M U_{0}\,\delta t~\leq~ E\label{robustre1}
\\ 
s.t.\quad \beta_{l}^k-\gamma_{l}^k+\theta~=~(N^{k})^ l\delta t\qquad\forall~ 1\leq  l\leq {\mathcal L},  \qquad 1\leq k\leq M   \label{robustre2}
\\ 
\beta_{l}^k,~\gamma_{ l}^k,~\theta~\geq~ 0\qquad\forall~ 1\leq  l\leq {\mathcal L}, \qquad 1\leq k\leq M   \label{robustre3}
\end{gather}
where \(\beta_{l}^k,~\gamma_{l}^k\) and \(\theta\) are dummy variables, $1\leq l\leq\mathcal{L}$, $1\leq k\leq M$.
\end{theorem}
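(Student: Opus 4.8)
The plan is to read the semi-infinite constraint (\ref{robust discrete}) as a worst-case requirement and to eliminate the inner worst case by linear-programming duality. Observe that (\ref{robust discrete}) holds if and only if
$$
\max_{\hat{\boldsymbol a}\in\hat{\boldsymbol\eta}_{a}}\ \sum_{k=1}^N\sum_{l=0}^{\mathcal L} a_{l,k}(N_{i,k})^l\,\delta t~\leq~E_i,
$$
so everything reduces to evaluating this inner maximum. Since the objective is linear in $\hat{\boldsymbol a}$ and $\hat{\boldsymbol\eta}_{a}$ is a polyhedron cut out by the box constraints $L_l\leq a_{l,k}\leq U_l$ together with the single budget inequality $\delta t\sum_k\sum_{l\geq 1}a_{l,k}\leq T\sum_{l\geq 1}U_l/\sigma$, the inner problem is a genuine linear program, and I would compute its optimal value through the dual.

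First I would peel off the zeroth-order terms: the variables $a_{0,k}$ occur only in their own box constraints and enter the objective with positive coefficient $(N_{i,k})^0\,\delta t=\delta t$, so in the maximum each is pushed to its upper bound $U_0$, contributing exactly $\sum_{k=1}^N U_0\,\delta t$ -- precisely the last term of (\ref{robustre1}). What remains is an LP in the variables $a_{l,k}$, $1\leq l\leq\mathcal L$. I would then write its dual, attaching multipliers $\beta_{l,k}\geq 0$ to the upper box constraints $a_{l,k}\leq U_l$, $\gamma_{l,k}\geq 0$ to the lower box constraints $a_{l,k}\geq L_l$, and a multiplier $\tilde\theta\geq 0$ to the budget inequality; after the rescaling $\theta\doteq\delta t\,\tilde\theta$, the dual feasibility conditions become exactly $\beta_{l,k}-\gamma_{l,k}+\theta=(N_{i,k})^l\,\delta t$ with $\beta_{l,k},\gamma_{l,k},\theta\geq 0$, i.e. (\ref{robustre2})--(\ref{robustre3}), while the dual objective becomes $\sum_{l,k}\beta_{l,k}U_l-\sum_{l,k}\gamma_{l,k}L_l+\tfrac{T\sum_{l=1}^{\mathcal L}U_l}{\sigma\delta t}\theta$, which together with the constant $\sum_k U_0\,\delta t$ is the left side of (\ref{robustre1}).

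To conclude I would invoke strong LP duality. The assumption $-\infty<L_l<U_l<\infty$ makes $\hat{\boldsymbol\eta}_{a}$ bounded (and the zeroth-order elimination keeps the reduced feasible set bounded), so the primal maximum is finite and attained; the assumption that $\hat{\boldsymbol\eta}_{a}$ has nonempty interior supplies a strictly feasible primal point. Hence there is no duality gap and the dual optimum is attained, giving
$\max_{\hat{\boldsymbol a}\in\hat{\boldsymbol\eta}_{a}}(\cdot)=\min\{\,\text{LHS of (\ref{robustre1})}\ :\ \text{(\ref{robustre2})--(\ref{robustre3})}\,\}$.
Therefore the robust inequality $\max(\cdot)\leq E_i$ holds iff this minimum is $\leq E_i$, iff there exist $\beta_{l,k},\gamma_{l,k},\theta$ feasible for (\ref{robustre2})--(\ref{robustre3}) with objective value at most $E_i$ -- which is exactly the asserted system (\ref{robustre1})--(\ref{robustre3}).

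The only points requiring care, rather than routine manipulation, are: (i) the $\delta t$ bookkeeping in the budget constraint, so that the multiplier rescaling produces precisely the coefficient $T\sum_{l=1}^{\mathcal L}U_l/(\sigma\delta t)$ on $\theta$; and (ii) the logical step that replaces ``for all $\hat{\boldsymbol a}\in\hat{\boldsymbol\eta}_{a}$'' not merely by a statement about the optimal dual value but by the \emph{existence} of a dual-feasible triple meeting the bound -- both of which are immediate once strong duality and attainment are established. I expect (ii), together with checking that the nonemptiness/boundedness hypotheses are exactly what is needed to rule out a duality gap, to be the main thing to state cleanly; everything else is standard.
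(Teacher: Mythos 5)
Your proposal is correct and follows essentially the same route as the paper: split off the zeroth-order terms (which decouple from the budget constraint and are pushed to $U_0$), dualize the remaining inner linear program with multipliers $\beta_{l,k},\gamma_{l,k},\theta$, and use strong LP duality (finiteness, zero gap, attainment) to convert the worst-case inequality into the existence of a dual-feasible point with objective at most $E_i$. Your explicit handling of the $\delta t$ rescaling of the budget multiplier is a detail the paper leaves implicit, but the argument is the same.
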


\begin{proof} 
The constraint \eqref{robust discrete} can be trivially rewritten as:
\begin{equation}
\max_{\hat{\boldsymbol a}\in\hat{\boldsymbol\eta}_{a}}\sum_{k=1}^M \sum_{ l=1}^{\mathcal L} a_{ l}^k (N^{k})^ l\delta t+
\max_{\hat{\boldsymbol a}\in\hat{\boldsymbol\eta}_{a}}\sum_{k=1}^M  a_{0}^k(N^{k})^0\delta t~\leq~ E,
\end{equation}
which is equivalent to
\begin{equation}\label{robust equivalent}
\max_{\hat{\boldsymbol a}\in\hat{\boldsymbol\eta}_{a}}\sum_{k=1}^M \sum_{ l=1}^{\mathcal L} a_{ l}^k(N^{k})^ l\delta t~\leq~ 
E - \sum_{k=1}^M  U_0\delta t 
\end{equation}
The left hand side of \eqref{robust equivalent} involves solving a parametric problem of the form:
\begin{align}
\label{prime1}
&\max \sum_{k=1}^M \sum_{ l=1}^{\mathcal L} a_{ l}^k(N^{k})^ l\delta t
\\
\label{prime2 const 1}
s.t.~~& a_{ l}^k~\leq~ U_ l \quad \forall 0\leq  l\leq {\mathcal L},~1\leq k\leq M 
\\
\label{prime2 const 2}
& a_{ l}^k~\geq~ L_ l \quad \forall 0\leq  l\leq {\mathcal L},~1\leq k\leq M
\\
\label{prime2}
&\sum_{k=1}^M \sum_{ l=1}^{\mathcal L} a_{ l}^k\delta t~\leq~ \frac{T\sum_{ l=1}^{\mathcal L}U_ l}{\sigma}
\end{align}
\noindent where we treat each $N^{k}$ as a constant parameter, $1\leq k\leq M$. The program \eqref{prime1}-\eqref{prime2} corresponds to the following dual problem: 
\begin{gather}
\min \sum_{ l=1}^{\mathcal L}\sum_{k=1}^M \beta_{l}^k U_{ l}-\sum_{ l=1}^{\mathcal L}\sum_{k=1}^M \gamma_{ l}^k L_{ l}+\frac{T\sum^{\mathcal L}_{ l=1}U_ l}{\sigma\delta t}\theta     \label{dual1}
\\ 
s.t.\quad  \beta_{ l}^k-\gamma_{ l}^k+\theta~=~(N^{k})^ l\delta t\qquad\forall ~1\leq  l\leq {\mathcal L},\qquad 1\leq k\leq M      \label{dual2}
\\
 \beta_{ l}^k,~\gamma_{ l}^k,~\theta~\geq~ 0\qquad\forall~ 1\leq  l\leq {\mathcal L},\qquad 1\leq k\leq M   \label{dual3}
\end{gather}
\noindent where $\beta_{l}^k$, $\gamma_{l}^k$ and $\theta$ are dual variables corresponding to constraints \eqref{prime2 const 1}, \eqref{prime2 const 2} and \eqref{prime2}, respectively. Under the assumption that \(\hat{\boldsymbol\eta}_{a}\) has nonempty interior, by noticing the compactness of \(\hat{\boldsymbol\eta}_{a}\), the primal program \eqref{prime1}-\eqref{prime2} and the dual program \eqref{dual1}-\eqref{dual3} have finite solutions and zero duality gap. Moreover, by duality, the objective value of any feasible solution of the dual problem \eqref{dual1}-\eqref{dual3} provides an upper bound of the primal problem \eqref{prime1}-\eqref{prime2}. Therefore, if there exist $\beta_{l}^k$, $\gamma_{l}^k$ and $\theta$ such that \eqref{dual2}-\eqref{dual3} are satisfied, then if $N^{k}$ satisfies \eqref{robustre1}, \eqref{robust equivalent} is also satisfied.

On the other hand, if there exists $N^{k}$ that satisfies \eqref{robust equivalent}, then the objective value of the optimal solution to the parametric problem \eqref{prime1}-\eqref{prime2} is bounded above and thus there exists $\beta_{l}^k$, $\gamma_{l}^k$ and $\theta$ such that \eqref{dual2}-\eqref{dual3} are satisfied. This shows the desired equivalence result.
\end{proof}

\begin{remark}
 Theorem \ref{theorem 1} is based on the discussions in \cite{bn1999} and \cite{Bertsimas etal2011a}. With this reformulation, the original semi-infinite constraint is now computable with standard nonlinear programming techniques. We will call the reformulation with constraints \eqref{robustre1}-\eqref{robustre3} the robust counterpart of the original robust problem \eqref{robust discrete}.
\end{remark}

The nonlinearity of constraints (\ref{robustre1})-(\ref{robustre3}) is caused by the powers of $N^{k}$ appearing in the right hand side of \eqref{robustre2}. And they obviously stem from the polynomial approximation \eqref{most abstract} with degree greater than one.  It is not difficult to see that linearity will be retained if $\mathcal{L}=1$, i.e. when the macroscopic  relationship is approximated with an affine function. This observation leads to the simplified reformulation discussed below.

\subsection{A special case when the macroscopic relationship is affine}\label{subsecaffine}
In a special case where the relationship between the link occupancy (LO) and the aggregate emission rate (AER) is approximately affine, the robust reformulation discussed previously is considerably simplified. Indeed, we can reduce \eqref{robust constraint} to
\begin{equation}\label{original linear}
\int_{0}^{T} \big[a_1(t) N(t) + a_0(t)\big]\,dt~\leq~ E\qquad\forall \big(a_0(\cdot),a_1(\cdot)\big)\in {\boldsymbol\eta}_{a}
\end{equation}
\noindent where the set ${\boldsymbol\eta}_{a}$ reduces to
\begin{equation}\label{uncertainty set affine}
{\boldsymbol\eta}_{a}~=~\left\{\big(a_0(\cdot),\,a_1(\cdot)\big):~L_{ l}\leq a_{ l}(t)\leq U_{ l},~\forall t,~l=0,\,1; ~\int_{0}^{T}{a_1(t)}dt\leq \frac{T U_{1}}{\sigma}=\frac{M\delta t U_{1}}{\sigma}\right\} 
 \end{equation}
 
\noindent With time-discretization, we have the following formulation:
\begin{equation}\label{lubounds}
\delta t\sum_{k=1}^M a_{1}^kN^k+a_{0}^k ~\leq~ E  \qquad\forall (a_{0},~a_{1})\in\hat {\boldsymbol\eta}_{a}
\end{equation}
\noindent where $a_0\doteq (a_{0}^k:\, 1\leq k\leq M)$, $a_1\doteq (a_{1}^k:\, 1\leq k\leq M)$. The uncertainty set is given as
\begin{equation}\label{affuncset}
\hat{\boldsymbol\eta}_{a}=\left\{(a_{0},\, a_{1}):L_0\leq a_{0}^k\leq U_0,~ L_1\leq a_{1}^k\leq U_1,~\forall 1\leq k\leq M; ~\sum_{k=1}^{M} a_{1}^k\leq \frac{M U_{1}}{\sigma}\right\}
\end{equation}
With this uncertainty set, we have the following result concerning the robust reformulation, which is a special case of Theorem \ref{theorem 1}.

\begin{corollary}{\bf (Robust reformulation with affine relationship)}\label{coraffine} If $\hat{\boldsymbol\eta}_{a}$ has nonempty interior, then the semi-infinite constraint \eqref{lubounds} is equivalent to the following set of constraints.
\begin{gather}\label{roconstraints1}
\sum_{k=1}^{M} U_{1} \beta^{k}-\sum_{k=1}^{M} L_{1} \gamma^{k}+\frac{M U_{1}}{\sigma}\theta+ MU_{0}\,\delta t~\leq~ E
\\ 
s.t.\quad \beta^{k}-\gamma^{k}+\theta~=~\delta t N^k \qquad \forall 1\leq k\leq M \label{roconstraints2}
\\
\beta^{k},~\gamma^{k},~\theta~\geq~ 0\qquad\forall 1\leq k\leq M \label{roconstraints3}
\end{gather}
where $\beta^{k}$,  $\gamma^{k}$,  $\theta$ are dual variables. 
\end{corollary}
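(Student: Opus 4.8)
The plan is to obtain Corollary~\ref{coraffine} as the specialization of Theorem~\ref{theorem 1} to the case $\mathcal{L}=1$, once the discretized link occupancy has been identified with the cumulative boundary-flow difference. First I would use $N_i(t)=N_{up,i}(t)-N_{down,i}(t)$ together with the discrete dynamics of Section~\ref{secdiscretetime} to write, at the $k$-th time step, $N_{i,k}=\delta t\sum_{j=0}^{k}\bar q_i^j-\delta t\sum_{j=0}^{k}\hat q_i^j$; substituting this into \eqref{original linear} and discretizing $\int_0^T(\cdot)\,dt$ by rectangular quadrature yields the semi-infinite constraint $\sum_{k=1}^N[a_{1,k}N_{i,k}+a_{0,k}]\,\delta t\le E_i$ for all $(a_0,a_1)\in\hat{\boldsymbol\eta}_a$, with $\hat{\boldsymbol\eta}_a$ the polytope \eqref{lubounds}.

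Next I would invoke Theorem~\ref{theorem 1}. Since the only order index $l$ with $1\le l\le\mathcal{L}$ is now $l=1$, every double sum $\sum_{l=1}^{\mathcal L}\sum_{k=1}^{N}$ collapses to a single sum over $k$, the dummy variables $\beta_{1,k},\gamma_{1,k}$ are renamed $\beta_k,\gamma_k$, and the term $(N_{i,k})^{l}\delta t$ on the right-hand side of \eqref{robustre2} becomes, for $l=1$, exactly $N_{i,k}\delta t=\delta t^2\sum_{j=0}^{k}\bar q_i^j-\delta t^2\sum_{j=0}^{k}\hat q_i^j$, i.e.\ \eqref{roconstraints2}. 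Using $T=N\delta t$ we get $\sum_{k=1}^{N}U_0\,\delta t=NU_0\,\delta t$ and $T\sum_{l=1}^{\mathcal L}U_l/(\sigma\delta t)=NU_1/\sigma$, so \eqref{robustre1} reduces to \eqref{roconstraints1} and \eqref{robustre3} to \eqref{roconstraints3}; the nonempty-interior hypothesis transfers verbatim, which completes the reduction.

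Alternatively --- and this may read more transparently --- I would reproduce the duality argument of Theorem~\ref{theorem 1} directly in this low-dimensional setting: peel off the zeroth-order term, whose worst case over $\hat{\boldsymbol\eta}_a$ contributes exactly $NU_0\,\delta t$, leaving the inner linear program $\max_{a_1}\sum_{k=1}^{N}a_{1,k}N_{i,k}\,\delta t$ subject to $L_1\le a_{1,k}\le U_1$ and $\sum_{k=1}^{N}a_{1,k}\le NU_1/\sigma$. Attaching multipliers $\beta_k$ to the upper box bounds, $\gamma_k$ to the lower box bounds and $\theta$ to the budget constraint, its LP dual is the minimization of $\sum_{k=1}^{N}(U_1\beta_k-L_1\gamma_k)+(NU_1/\sigma)\theta$ over $\beta_k,\gamma_k,\theta\ge0$ subject to $\theta+\beta_k-\gamma_k=N_{i,k}\delta t$. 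The semi-infinite constraint holds if and only if this dual optimum is at most $E_i-NU_0\,\delta t$, which by strong LP duality is equivalent to the existence of dual-feasible $\beta_k,\gamma_k,\theta$ meeting that bound --- precisely \eqref{roconstraints1}--\eqref{roconstraints3}.

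There is no genuine conceptual obstacle here, since everything follows from Theorem~\ref{theorem 1}; the only points needing care are bookkeeping ones. The first is tracking the two factors of $\delta t$ --- one arising from discretizing $\int_0^T(\cdot)\,dt$, the other hidden inside $N_{i,k}$ because the link occupancy is itself a time-integral of boundary flows --- which is why the right-hand side of \eqref{roconstraints2} carries $\delta t^2$ rather than $\delta t$. The second is checking the hypotheses of strong duality: $\hat{\boldsymbol\eta}_a$ is a compact polytope, nonempty precisely because $\sigma$ obeys the bound \eqref{sigmabound}, so the inner LP attains its maximum and, having a feasible bounded primal, its dual attains the same finite value with zero gap --- exactly the ingredients used in the proof of Theorem~\ref{theorem 1}.
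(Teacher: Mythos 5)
Your proposal is correct and follows exactly the route the paper intends: the paper offers no separate proof of Corollary \ref{coraffine}, stating only that it is ``a special case of Theorem \ref{theorem 1},'' and your specialization to $\mathcal{L}=1$ (with the $T=N\delta t$ bookkeeping and the $\delta t^2$ factor from the occupancy being itself a time-integral of boundary flows) carries that out faithfully. Your alternative direct LP-duality derivation is just the proof of Theorem \ref{theorem 1} replayed in the affine case, so it adds transparency but not a genuinely different argument.
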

Once the semi-infinite constraint \eqref{lubounds} is reformulated as a finite set of linear constraints according to Corollary \ref{coraffine}, the LWR-E problem with an affine macroscopic relationship can be formulated and solved as a mixed integer linear program. Such an MILP will be numerically tested in Section \ref{secNE}.

\subsection{Piecewise affine macroscopic relationship}
This section formulates the emission constraints for a class of more general macroscopic relationships, namely, piecewise affine functions. Since any continuous function can be approximated by a piecewise affine function, the formulation provided below will accommodate a wider range of macroscopic relationships not yet presented in this paper. In order to retain linearity in the constraints, we consider two types of piecewise affine relationships: convex piecewise affine and concave piecewise affine. Given the same objective to be minimized, convexity and concavity in the macroscopic relationship render completely different structures of the robust optimization and the MILP reformulation.  More specifically, the convex piecewise affine constraint essentially means that \underline{all} affine pieces can be bounded from above by a prescribed level in the worst case scenario. In contrast, the concave piecewise affine constraint means that \underline{at least one} affine piece can be bounded from above in the worst case scenario. Such a difference looms large, especially, when we consider a computable reformulation in the form of a set of mixed integer linear constraints.  In the following, we will discuss the convex case in Subsection \ref{secconvex piecewise linear} and the concave case in Subsection \ref{secconcave piecewise linear}.

\subsubsection{Convex piecewise affine relationship}\label{secconvex piecewise linear}
It will be seen in this section that the previously presented reformulation can be easily extended to treat the convex piecewise affine case. We denote by \(\mathbb M\) the index set of affine pieces that constitute the piecewise affine function. For each affine piece $m\in\mathbb{M}$, we let  $b_{0,m}(t)$ and $b_{1,m}(t)$ be the zeroth- and first-order coefficients, respectively. We further employ the notation ${\boldsymbol b}(t)\doteq \big(b_{\tau,m}(t):~\tau\in\{0,\,1\},~m\in\mathbb M\big)$ for  \(t\in[0,~T]\).

With these notations, the convex piecewise affine approximation of the AER is expressed as:
\begin{equation}
AER\big(N(t);\,{\boldsymbol b}(t)\big)~\doteq~ \max_{m\in \mathbb M}\Big\{b_{1,m}(t)\cdot N(t)+b_{0,m}(t)\Big\}\qquad t\in[0,~T]\label{piecewise linear}
\end{equation}
\noindent Again, we consider the following emission constraint:
\begin{equation}
\int_{0}^TAER\big(N(t);\,{\boldsymbol b}(t)\big)\,dt~\leq~ E\qquad \forall \boldsymbol b(\cdot)\in{\boldsymbol\eta}_{b}\label{piecewise continuous}
\end{equation}
\noindent where the budget-like uncertainty set is 
\begin{multline}
{\boldsymbol\eta}_{b}=\left\{\vphantom{\sum_{m\in\mathbb M}\int_{0}^{T}{b_{1,m}(t)}}\boldsymbol b(\cdot):~L_{\tau,m}\leq b_{\tau,m}(t)\leq U_{ \tau,m},~\forall\tau\in\{0,\,1\},~\forall t\in[0,\,T],~\forall m\in\mathbb M; \right.
\\\left. ~\sum_{m\in\mathbb M}\int_{0}^{T}{b_{1,m}(t)}\,dt~\leq~ \frac{T\sum_{m\in\mathbb M}U_{1,m}}{\sigma}\right\}\label{uncertainty set piecewise affine convex}
\end{multline}
\noindent The constraint \eqref{piecewise continuous} can be immediately discretized as:
\begin{equation}\label{discretePWASI}
\delta t \sum_{k=1}^M  \max_{m\in \mathbb M}\left\{ b_{1,m}^k \cdot N^{k}+b_{0,m}^k\right\}~\leq~ E \qquad \forall \hat{\boldsymbol b}\in\hat{\boldsymbol\eta}_{b} 
\end{equation}
\noindent where $\hat{\boldsymbol b}\doteq (b_{\tau,m}^k:~\tau\in\{0,1\},~1\leq k\leq M,~m\in\mathbb M)$ is the discrete-time version of ${\boldsymbol b} (\cdot)$, and the time-discretized uncertainty set is:
\begin{multline}
\hat{\boldsymbol\eta}_{b}=\left\{\vphantom{\sum_{m\in\mathbb M}\frac{T\sum_{m\in\mathbb M}U_{1,m}}{\sigma}}\hat{\boldsymbol b}:~L_{\tau,m}\leq b_{\tau,m}^k\leq U_{ \tau,m},~\forall\tau\in\{0,1\},~\forall 1\leq k\leq M, ~\forall m\in\mathbb M; \right.
\\
\left. ~\sum_{m\in\mathbb M}\delta t \sum_{k=1}^M b_{1,m}^k~\leq~ \frac{T\sum_{m\in\mathbb M}U_{1,m}}{\sigma}\right\}\label{uncertainty set piecewise affine discrete}
\end{multline}
\noindent Evidently, \eqref{discretePWASI} is equivalent to 
\begin{equation}\label{piecewise continuous new}
\max_{\hat{\boldsymbol b}\in\hat{\boldsymbol\eta}_{b} }\delta t \sum_{k=1}^M  \max_{m\in \mathbb M}\left\{ b_{1,m}^k \cdot N^{k}+b_{0,m}^k\right\}~\leq~ E
\end{equation}
Even though the  piecewise affine function is convex, the robust counterpart \eqref{piecewise continuous new} involves a convex maximization (or equivalently, concave minimization) problem, which is nonconvex and intractable in general. The following theorem shows that the robust constraint \eqref{piecewise continuous new} can be equivalently rewritten as a set of linear constraints with the aid of dual variables. 

\begin{theorem}{\bf (Robust reformulation with convex piecewise affine relationship)}\label{piecewise linear convex theorem}
Let constants $L_{\tau, m}<U_{\tau,m}$ and $\sigma$ be given, $\tau\in\{0,\,1\},\, m\in\mathbb{M}$. Define a set of matrices 
$$
\mathbb V\doteq\left\{\left(\upsilon_{m,k}:~m\in\mathbb M,~1\leq k\leq M\right)\in \{0,\,1\}^{\vert \mathbb M\vert\times M}:~\sum_{m\in\mathbb M}\upsilon_{m,k}=1,~\forall\, 1\leq k\leq M~\right\}
$$
\noindent If \(\hat{\boldsymbol\eta}_{b}\) has nonempty interior, then the semi-infinite constraint (\ref{discretePWASI}) is equivalent to the following finite set of linear constraints: 
\begin{align}\label{piecewise 1}
\begin{rcases}
\\&\displaystyle  \sum_{{m}\in\mathbb M}\sum_{{k}=1}^M \left(-L_{1,{m}}\gamma_{1,{m}}^{s,k}+\beta_{1,{m}}^{s,k} U_{1,{m}}+U_{0,{m}}\beta_{0,{m}}^{s,k}-L_0\gamma_{0,{m}}^{s,k}\right)+\frac{M\sum_{{m}\in\mathbb M}U_{1,{m}}}{\sigma}{\theta^{s}}~\leq~ E \nonumber
\\
&-\gamma_{1,{m}}^{s,k}+\beta_{1,{m}}^{s,k}+{\theta^{s}}~=~ N^{k} \upsilon^{s}_{m,k}\delta t\qquad \forall 1\leq {k}\leq M,~{m}\in\mathbb M
\\
&-\gamma_{0,{m}}^{s,k}+\beta_{0,{m}}^{s,k}~=~\upsilon^{s}_{m,k}\delta t\qquad \forall 1\leq {k}\leq M,~{m}\in\mathbb M 
\\ 
& \gamma_{1,{m}}^{s,k}~\geq~ 0,~\beta_{1,{m}}^{s,k}~\geq~ 0, ~\gamma_{0,{m}}^{s,k}~\geq~ 0,~\beta_{0,{m}}^{s,k}~\geq~ 0\qquad \forall  1\leq {k}\leq M,~{m}\in\mathbb M 
\\
& {\theta^{s}}~\geq~ 0
\end{rcases}~\forall \upsilon_{m,k}^{s}\in\mathbb V
\end{align}
\noindent where $\beta_{0,m}^{s,k},~\gamma_{0,m}^{s,k}$, $\beta_{1,m}^{s,k},~\gamma_{1,m}^{s,k}$ and $\theta^{s}$ are dual variables.
\end{theorem}
\begin{proof} 
The proof is moved to \ref{appendix A} for the conciseness of our presentation.
\end{proof}

Similar to Corollary \ref{coraffine} which handles the affine relationship, Theorem \ref{piecewise linear convex theorem} provides a computable formulation that preserves linearity in the constraints even though the relationship between AER and LO is in a more general functional form. Notice that, however, the set of constraints shown in Theorem \ref{piecewise linear convex theorem} have to be satisfied for each and every matrix $\upsilon_{m,k}^s$ in $\mathbb{V}$. In other words, there are $|\mathbb{V}|$ copies of such set of constraints, and each copy is parameterized by $s$. The cardinality  $\vert \mathbb V\vert$ of the finite set $\mathbb V$ is $\vert \mathbb M\vert^M$. This means that the number of linear constraints grows exponentially. Nonetheless, these exponentially many  constraints are all linear and do not involve any integer variables. We also make note of the fact that the presence of exponentially many constraints is not rare in combinatorial optimization problems; examples include the traveling salesman problem and the minimum spanning tree problem. In addition, algorithms such as the cutting plane method may be employed to systematically enumerate these constraints, which may lead to reduction in the computational cost. \cite[For an example of this type of algorithms, see][]{Nemhauser and Sigismondi}.

\subsubsection{Concave piecewise affine relationship}\label{secconcave piecewise linear}

We show in this section that, when the relationship is piecewise affine and concave, one can maintain the linearity in the constraints by modifying the uncertainty sets and introducing additional integer variables. With the same notations introduced in Section \ref{secconvex piecewise linear}, we define the concave and piecewise affine  relationship as
\begin{equation}\label{concave piecewise linear}
AER\big(N(t);\, {\boldsymbol b}(t)\big)~\doteq~ \min_{m\in \mathbb M}\Big\{b_{1,m}(t)\cdot N(t)+b_{0,m}(t)\Big\}\qquad t\in[0,~T] 
\end{equation}
\noindent Notice that the ``max" operator from the convex case has now been changed to ``min". The emission constraint reads:
\begin{equation}
\int_{0}^TAER\big( N(t);\,{\boldsymbol b}(t)\big)\,dt~\leq~ E \qquad \forall \boldsymbol b(\cdot)\in{\boldsymbol\eta}_{b} \label{concave piecewise continuous}
\end{equation}
Here we have modified the uncertainty set \({\boldsymbol\eta}_{b}\) to be a product of sets: ${\boldsymbol\eta}_b=\eta_{b,1}\times\eta_{b,2}\times\dots\times \eta_{b,m}\times\dots\times\eta_{b,\vert\mathbb M\vert}$ with 
\begin{equation}
\eta_{b,m}=\left\{\boldsymbol b_m(\cdot):~L_{\tau,m}\leq b_{\tau,m}(t)\leq U_{ \tau,m},~\forall\tau\in\{0,1\},~\forall t\in[0,\,T]; ~\int_{0}^{T}{b_{1,m}(t)}\,dt~\leq~ \frac{TU_{1,m}}{\sigma_m}\right\} \label{uncertainty set piecewise affine}
\end{equation}
where \(\boldsymbol b_m(\cdot)\doteq (b_{\tau,m}(\cdot):~\tau\in\{0,1\})\).
Unlike the uncertainty set defined in \eqref{uncertainty set piecewise affine convex}, in this case the uncertain parameters associated with different affine pieces are uncorrelated. In other words, the constraints related to the uncertain parameters of different affine pieces in the uncertainty set are decoupled.  
A time-discretization of constraint \eqref{concave piecewise continuous} is given as following:
\begin{equation}\label{concave discrete piecewise linear}
\sum_{k=1}^M \min_{m\in \mathbb M}(  b_{1,m}^k\cdot N^{k}+b_{0,m}^k)\delta t~\leq~ E
\qquad 
\forall \hat{\boldsymbol b}\doteq (\hat{\boldsymbol b}_1,\dots,\hat{\boldsymbol b}_{\vert\mathbb M\vert})\in\hat{\boldsymbol \eta}_b\doteq \hat{\eta}_{b,1}\times\dots\times\hat{\eta}_{b,\vert\mathbb M\vert}
\end{equation}
\noindent where $\hat{\boldsymbol b}_m\doteq (b_{\tau,m}^k:~\tau\in\{0,1\},~1\leq k\leq M)$,  $\forall  m\in\mathbb M$. The uncertainty set $\hat{\eta}_{b,m}$ is defined as
\begin{equation}
\hat{\eta}_{b,m}=\left\{\hat{\boldsymbol b}_m:~L_{\tau,m}\leq b_{\tau,m}^k\leq U_{ \tau,m},~\forall\tau\in\{0,1\},~\forall 1\leq k\leq M,~\sum_{k=1}^M{b_{1,m}^k}\delta t~\leq~ \frac{TU_{1,m}}{\sigma_m}\right\}\quad m\in\mathbb{M}
\end{equation}
The following theorem presents a reformulation of the robust constraint as a set of mixed integer linear constraints.
\begin{theorem}{\bf (Robust reformulation with concave piecewise affine relationship)}\label{thmconcavepwa}
Let constants  $L_{\tau,m}<U_{\tau,m}$ and $\sigma_m$ be given for all $\tau\in\{0,1\}$ and $m\in\mathbb M$. If \(\hat{\boldsymbol\eta}_{b}\) has nonempty interior, the semi-infinite constraint (\ref{concave discrete piecewise linear}) is equivalent to the following set of linear constraints:
\begin{align}\label{concave piecewise 1}
\begin{split}
&\sum_{m\in\mathbb M}\left\{-\sum_{k=1}^M L_{1,m}\gamma_{1,m}^k+\sum_{k=1}^M\beta_{1,m}^k U_{1,m}+\theta_m\frac{M U_{1,m}}{\sigma_m}+\sum_{k=1}^M U_{0,m}\beta_{0,m}^k-\sum_{k=1}^ML_0\gamma_{0,m}^k\right\}~\leq~ E
\\
&N^k \delta t- (1-\upsilon_{m,k})\mathcal M\leq -\gamma_{1,m}^k+\beta_{1,m}^k+\theta_m ~\leq~ (1-\upsilon_{m,k})\mathcal M+N^k \delta t \qquad \forall  1\leq k\leq M,~m\in\mathbb M
\\
&-\upsilon_{m,k}\mathcal M\leq -\gamma_{1,m}^k+\beta_{1,m}^k+\theta_m ~\leq~ \upsilon_{m,k}\mathcal M  \qquad \forall  1\leq k\leq M,~m\in\mathbb M
\\
&
-\gamma_{0,m}^k+\beta_{0,m}^k~=~\upsilon_{m,k}\delta t \qquad \forall 1\leq k~\leq~ M,~m\in\mathbb M
\\ 
& \gamma_{1,m}^k\geq 0,~\beta_{1,m}^k~\geq~ 0, ~\gamma_{0,m}^k\geq 0,~\beta_{0,m}^k\geq 0\qquad \forall  1\leq k\leq M,~m\in\mathbb M
\\
& \upsilon_{m,k}\in\{0,~1\}\qquad \forall  1~\leq~ k\leq M,~m\in\mathbb M
\\
& \theta_m~\geq~ 0\qquad~m\in\mathbb M
\\
& \sum_{m\in\mathbb M}\upsilon_{m,k}~=~1\qquad \forall  1\leq k\leq M
\end{split}
\end{align}
\noindent where $\beta_{1,m}^k,~\gamma_{1,m}^k\), \(\beta_{0,m}^k,~\gamma_{0,m}^k$ and $\theta_{m}$ are dual variables and $\mathcal M>0$ is  sufficiently large.
\end{theorem}
\begin{proof} 
The proof is postponed until \ref{appendix B}.
\end{proof}

The concave piecewise affine relationship results in  nonconvex constraints, which can bring substantial computational challenges. In fact, these nonconvex constraints cannot be well handled by existing commercial solvers, including CPLEX and Gurobi.  Nonetheless, with Theorem \ref{thmconcavepwa}, we can again retain linearity by introducing additional integer variables. We further note that due to the last constraint in \eqref{concave piecewise 1}, these integer variables are highly correlated so that the search space of the problem grows only on an order of $M$. Therefore, our reformulation significantly reduces the computational ramifications of handling a set of nonconvex constraints.

\section{Generalization of the robust optimization approach}\label{secgeneralization}
As mentioned right before Section \ref{subsecesc}, the proposed robust optimization formulation can be easily extended to capture two additional types of environmental considerations: (1)  to minimize the total emission on a subset of the network; and (2) to minimize the differences in the total emission on some relevant links, or,  to ``equalize" the emissions among the links (environmental equity). These two generalizations will be discussed in this section.

\subsection{Minimizing emissions}\label{subseceminimize}
The proposed model is easily generalizable to the case where the traffic-driven emissions are subject to minimization rather than side constraints. In this case, we conveniently invoke the ``epigraph reformulation''. For example, if the objective is to minimize the total emission amount on some subset of links $\mathbb{I}_S\subset\mathbb{I}$ where $\mathbb{I}$ denotes the set of links in the network; that is, 
$$
\min \sum_{I_i\in \mathbb{I}_S} \int_0^T AER_i\big(N_i(t);\,\boldsymbol{a}_i(t)\big)\,dt
$$
\noindent We may simply minimize a dummy variable $z$ with one additional constraint  
$$
\sum_{I_i\in\mathbb{I}_S} \int_{0}^T AER_i\big(N_i(t);\,\boldsymbol a(t)\big)\,dt~\leq~ z
$$
This additional constraint reduces to the form of \eqref{deterministic constraint}, and our previously presented results apply here.

\subsection{Environmental equity}\label{equity}

The equity constraint is relevant when the network planner needs to enforce emissions to be distributed relatively evenly across all or part of a network. To address this consideration, we consider the constraint of the following form:
\begin{equation}
\int_{0}^TAER_i\big(N_i(t);\,\boldsymbol a_i(t)\big)\,dt - \int_{0}^T AER_j\big(N_j(t);\,\boldsymbol a_j(t)\big)\,dt~\leq~ E_{ij}\qquad \forall I_i,~I_j\in\mathbb{I}\nonumber
\end{equation}
\noindent where $E_{ij}$ is some prescribed threshold. We assume for now no uncertainty in the parameters $\boldsymbol a_i(\cdot)$ and $\boldsymbol a_j(\cdot)$. By assuming an affine relationship between the emission rate and link occupancy, without loss of generality, this constraint can be rewritten as
$$
\int_{0}^{T} \left\{a_{i,1}(t)N_i(t)+a_{i,0}(t)\right\}dt -\int_{0}^T \left\{a_{j,1}(t)N_{j}(t)+a_{j,0}(t)\right\}dt~\leq~ E_{ij} \qquad \forall I_i,~I_j\in\mathbb{I}
$$
Now we allow $\boldsymbol a_i(\cdot)$ and  $\boldsymbol a_j(\cdot)$ to be uncertain and the uncertainty for each  is described by budget uncertainty sets $\boldsymbol \eta_{a_i}$ and $\boldsymbol\eta_{a_j}$ similar to \eqref{uncertainty set}. Then we have the following robust constraint:
\begin{equation}\label{Eijrobust}
\begin{array}{c}
\displaystyle \int_{0}^{T} \left\{a_{i,1}(t)N_{i}(t)+a_{i,0}(t)\right\}dt -\int_{0}^T \left\{a_{j,1}(t)N_{j}(t)+a_{j,0}(t)\right\} dt~\leq~ E_{ij}
\\\\
 \forall \boldsymbol a_i(\cdot)~\doteq~(a_{i,1}(\cdot),~a_{i,0}(\cdot))\in{\boldsymbol\eta}_{a_i},\quad\boldsymbol a_j(\cdot)~\doteq~(a_{j,1}(\cdot),~a_{j,0}(\cdot))\in{\boldsymbol\eta}_{a_j}\quad \forall I_i,~I_j\in\mathbb{I}
 \end{array}
\end{equation}
This is equivalent to the following: $\forall I_i,\,I_j\in\mathbb{I}$,
$$
\sup_{\boldsymbol a_i(\cdot)\in{\boldsymbol\eta}_{a_i}}\left\{\int_{0}^{T} \left\{a_{i,1}(t)N_{i}(t)+a_{i,0}(t)\right\}dt\right\}
+
\sup_{\boldsymbol a_j(\cdot)\in{\boldsymbol\eta}_{a_j}}\left\{-\int_{0}^T \left\{a_{j,1}(t)N_{j}(t)+a_{j,0}(t)\right\}dt\right\}~\leq~ E_{ij}
$$
We proceed as before to time-discretize the problem. Explicit reformulation of the discretized constraints can be easily derived by invoking the dual formulations for each of the two maximization problems involved in the constraint. These dual formulations differ little from those discussed in Section \ref{subsecaffine} and thus are omitted here. As a result, linear constraints can be easily derived to replace the robust constraint \eqref{Eijrobust}, and the MILP formulation for the environmental equity problem follows.

Finally, one may also try to minimize the difference $E_{ij}$ between any two links $I_i$ and $I_j$, by adopting the same procedure discussed in Section \ref{subseceminimize} through the ``epigraph reformulation".

\section{Numerical Study}\label{secNE}

\subsection{Network setup}
In this section, we consider a hypothetical network consisting of four intersections, three of which are signalized; see Figure \ref{fignetwork}. The ten links in this network are assumed to have the same triangular fundamental diagram, whose parameters are shown below.
$$
v~=~40/3 ~\hbox{(meter/s)},\quad \rho^{jam}~=~0.4~ \hbox{veh/meter},\quad \rho^c~=~0.1 ~\hbox{(veh/meter)},\quad C~=~4/3~\hbox{(veh/s)}
$$
\noindent In addition, all links have the same length of $L=400$ meters.

\begin{figure}[h!]
\centering
\begin{minipage}[c]{0.41\textwidth}
\centering
\includegraphics[width=\textwidth]{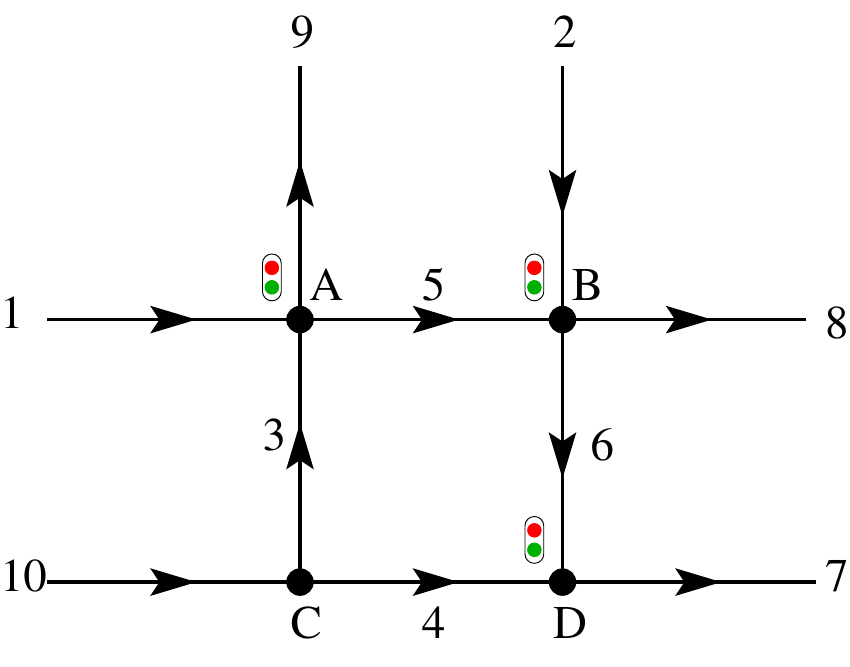}
\end{minipage}
\begin{minipage}[c]{0.57\textwidth}
\centering
\includegraphics[width=\textwidth]{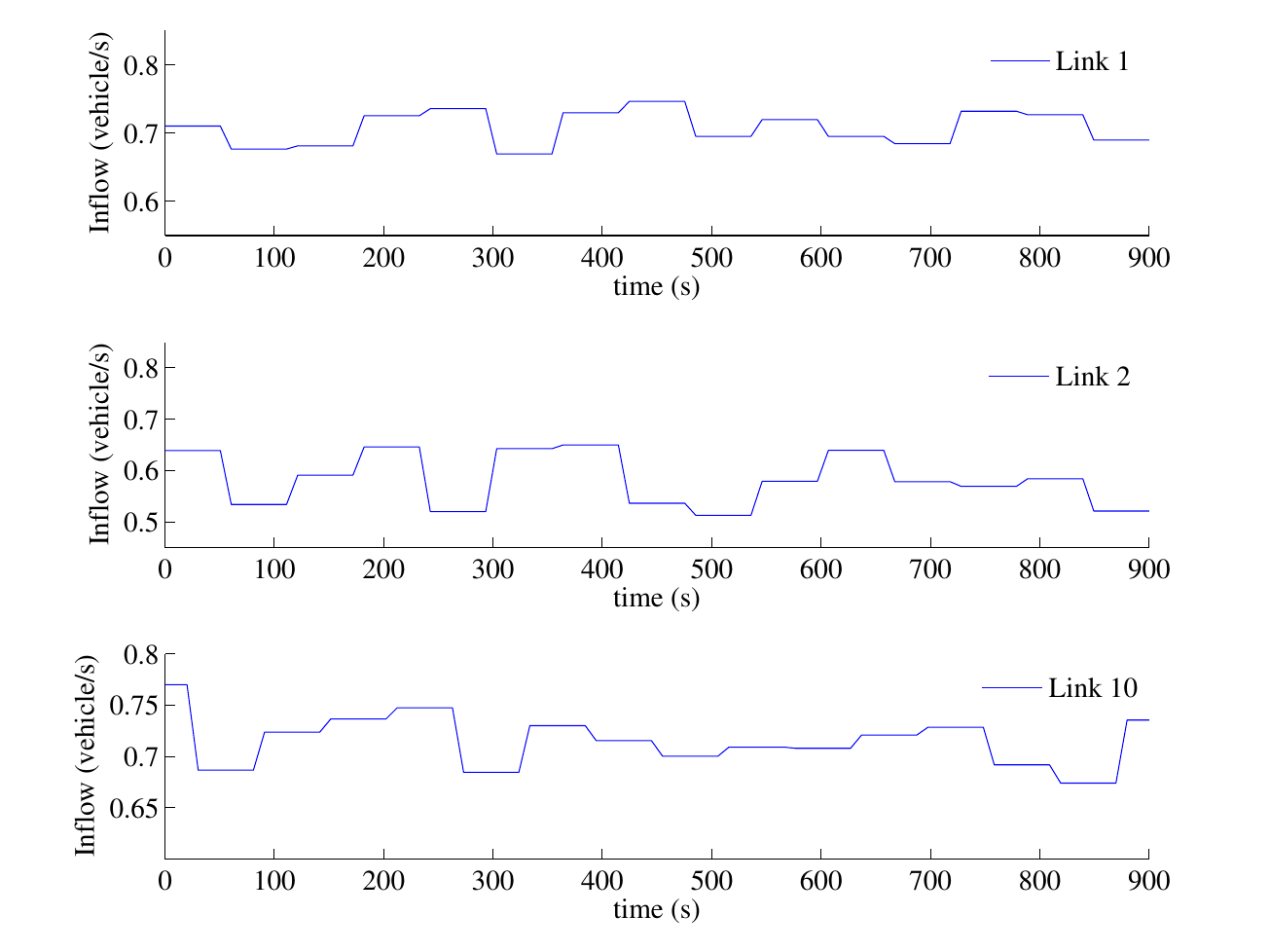}
\end{minipage}
\caption{Test network (left); and the demand profile corresponding to Scenario I (right).}
\label{fignetwork}
\end{figure}

For simplicity, we assume that routing is such that vehicles at all intersections have a fixed probability of selecting either of the two downstream approaches; and the turning ratios are specified as follows.
$$
\alpha_{1,5}~=~0.50,\qquad\alpha_{3,5}~=~0.40,\qquad \alpha_{2,6}~=~0.30,\qquad\alpha_{5,6}~=~0.50,\qquad \alpha_{10,3}~=~0.53
$$
Note that this assumption is not essential to our formulation or computation, but was made to simplify the presentation of results. In reality these turning percentages may be estimated based on turn-by-turn vehicle counts at intersections. The time horizon of our numerical example is a 15-min time period, with a time step of $10$ seconds.

In order to test the effectiveness of the proposed signal timing in reducing both congestion and emission under different levels of congestion, we consider three scenarios with three different levels of demand at the upstream ends of all boundary links.  The demand profile in the first scenario, which is shown in Figure \ref{fignetwork}, corresponds to the lightest traffic load. Table \ref{tabdemands} shows, for each of the three scenarios, the ratios between the average link inflows and the link flow capacity.

\begin{table}[h!]
\centering
\begin{tabular}{|c|c|c|c|}\hline
& Scenario I (light) &       Scenario II (medium) &  Scenario III (heavy)\\
\hline                 
Link 1   & 53.1 \%     &    60.6 \%    &   64.4 \%         \\
Link 2    & 43.7 \%   &     51.2 \%    &   54.9 \%        \\
Link 10  & 51.6 \%  &     68.4 \%       &   83.38 \%    \\
 \hline
\end{tabular}
\caption{Demand levels in the three scenarios with different traffic loads. The values in the table represent ratios between the average link inflows and the link flow capacity.}
\label{tabdemands}
\end{table}

Throughout this numerical study, the MILPs were  solved with ILOG Cplex 12.1.0, which ran with Intel Xeon X$5675$ Six-Core 3.06 GHz processor provided by the Penn State Research Computing and Cyberinfrastructure.

\subsection{Calibration of the uncertainty set}\label{subsecnummacro}

In this section, the macroscopic relationship between a link's aggregate emission rate (AER)  and its link occupancy (LO) will be analyzed to inform the uncertainty set used in the robust optimization. Since the link parameters in this numerical example are identical to those employed in Section \ref{subsecsimsetup}, we may focus on the macroscopic relationship depicted in Figure \ref{figModal_emission} without conducting further numerical experiments.

The macroscopic relationship shown in Figure \ref{figModal_emission}  is approximately affine with the following regression coefficients
\begin{equation}\label{nummr}
AER~\approx~a_1 \times LO+ a_0~=~52.31\times LO + 318.63
\end{equation}
where $AER$ (in gram/hour) denotes the aggregated emission rate of hydrocarbon on a link level; $LO$ (in number of vehicles) denotes the link occupancy. 
\begin{figure}[h!]
\centering
\includegraphics[width=.65\textwidth]{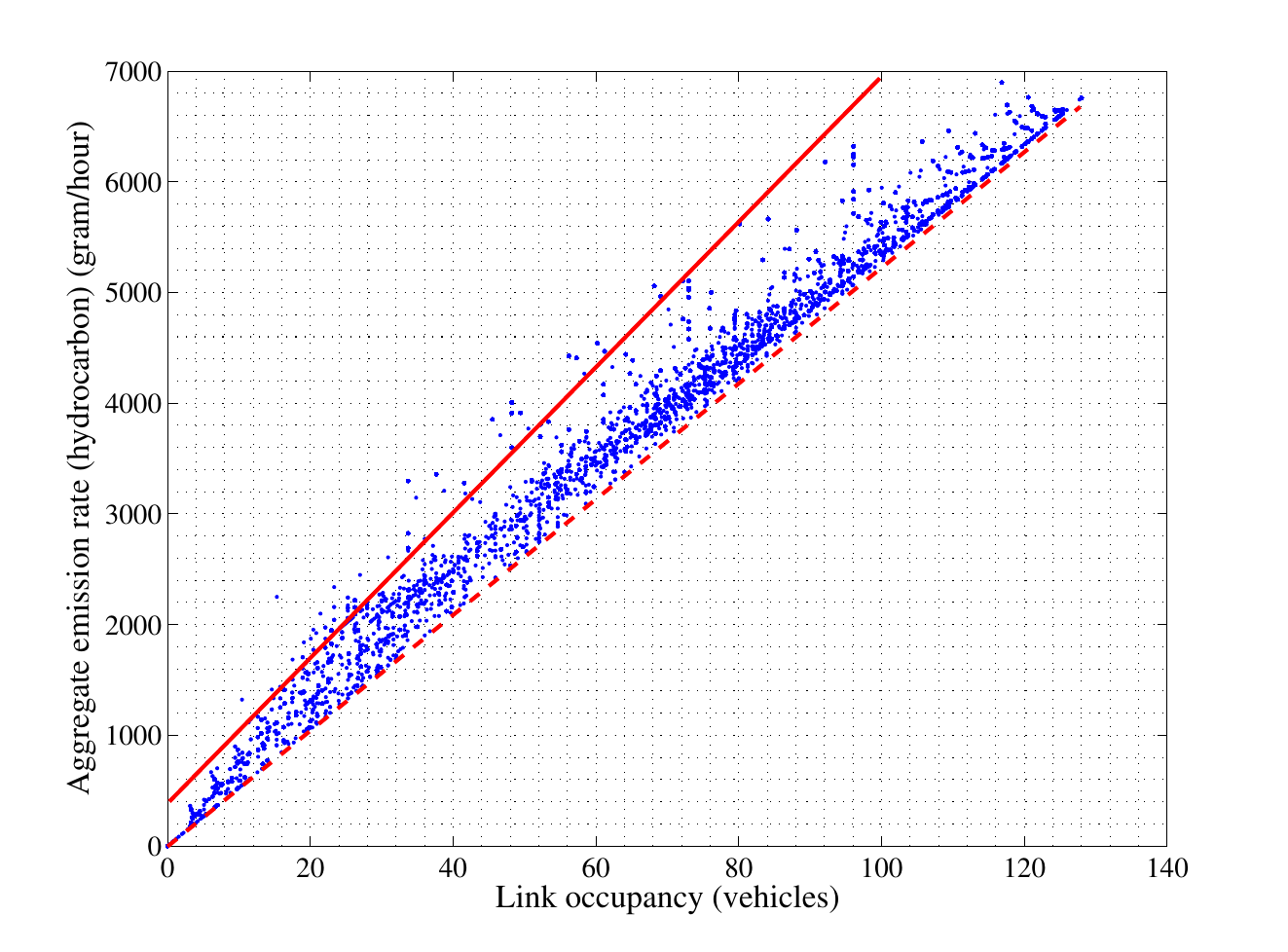}
\caption{Scatter plot of the macroscopic relationship obtained from simulation. The solid line represents the upper envelop of the uncertainty region; the dashed line represent the lower envelop of the uncertainty region.}
\label{figSimulation_Result}
\end{figure}

In order to construct the uncertainty set of the form \eqref{affuncset} for the robust optimization, we select the following lower and upper bounds for the affine coefficients:
\begin{equation}\label{numlbb1}
L_1~\leq~a_1~\leq~U_1,\qquad L_0~\leq~a_0~\leq~U_0
\end{equation}
\noindent where 
\begin{equation}\label{numlbb2}
L_0=0,\quad U_0=400,\quad L_1=53.3,\quad U_1=66
\end{equation}
\noindent The corresponding lower envelop (by letting $a_1=L_1$, $a_0=L_0$) and upper envelop  (by letting $a_1=U_1$, $a_0=U_0$) of the uncertain region are now shown in Figure \ref{figSimulation_Result}. Notice that one has a lot of freedom in choosing these upper and lower bounds; however, they do affect the performance of the resulting robust optimization. As we commented in Section \ref{subsecaffine}, the upper envelop sets a worse-case value for the emission rates, and it is likely to overestimate the emission rates for some/most scenarios. From Figure \ref{figSimulation_Result} we see that while the upper envelop provides a tight bound on the emission rates when the traffic is relatively light (i.e., $0\leq LO\leq 40$ vehicles), it tends to mostly overestimate the emission rates when the traffic volume grows (i.e., $LO\geq 40$ vehicles). To avoid the robust constraints being too conservative, we invoke the parameter $\sigma$ introduced in \eqref{affuncset} to adjust the conservativeness by allowing some realized coefficients $a_{1}^k$ to be strictly less than the upper bound $U_1$, that is,
\begin{equation}\label{nesigma}
\sum_{k=1}^M a_{1}^k~\leq~M{U_1\over \sigma}\qquad \hbox{for some }~\sigma\in\left[1,\,{U_1\over L_1}\right]
\end{equation}
where $a_{1}^k$ can be interpreted as the first-order coefficient in an actual (realized) instance of the relationship between $LO$ and $AER$ at the $k$-th time step. A general rule of thumb is that the more the upper bound seems to overestimate, the larger $\sigma$ should be, although  ideally such a choice should be specifically quantified and even optimized based on available data. Interested reader is referred to \cite{Bandi} and \cite{Bertsimas et al 2014} for some discussions on data-driven calibration of the uncertainty set. In our particular example, we choose $\sigma=1.2$. Other choices of $\sigma$ can be also considered but will not be elaborated in this paper.

Notice that the region formed by the lower and upper envelops does not contain all the points shown in the figure. However, the majority (96.06\%) of these points fall within this region, and we treat the rest as outliers. The reason for ignoring these outliers is that they only make up 3.94\% of the total dataset, and are sparsely distributed outside (and consistently above) the uncertainty region. Including these points in the uncertainty set would make our estimation too conservative by considering the (very small) chance that these emission rates occur.

The uncertainty set for the robust optimization, according to \eqref{affuncset}, is therefore constructed as follows. 
\begin{equation}\label{numuncs}
\hat\eta_{a,i}=\left\{(a_{0,k},\, a_{1,k}):0\leq a_{0,k}\leq 400,~ 53.3\leq a_{1,k}\leq 66,~\forall 1\leq k\leq M, ~\sum_{k=1}^{N} a_{1,k}\leq 55M\right\}
\end{equation}
for all link $I_i\in\mathbb{I}$ in the network, where $M$ is the number of time intervals in this problem. Notice that the same uncertainty set \eqref{numuncs} is used for all the links in the network because they have identical parameters; if not, the uncertainty sets needs to be calibrated separately.

\subsection{The base case}

For comparison purposes, we first consider a base case where the traffic signal timing is optimized, but without any emission considerations. This is achieved by simply solving the mixed integer linear program introduced in Section \ref{secLWRMILP}. In the base case we are mainly concerned with maximizing network throughput and minimizing delays; thus in view of \eqref{mipobj} we adopt the following objective function:
\begin{equation}\label{nsobj}
\max \sum_{k=1}^M {1\over 1+k} \left(\hat q^k_7+\hat q^k_{8}+\hat q^k_{9}\right)
\end{equation}
where $\hat q_7^k$, $\hat q_8^k$ and $\hat q_9^k$ are the exit flows at the $k$-th time step on the outgoing links $7$, $8$, and $9$, respectively. An objective function of the form \eqref{nsobj} tends to maximize the network throughput at any instance of time, and is similarly considered by \cite{GVM2, CSC, HGPFY}.

The base case results corresponding to the three demand levels are summarized in Table \ref{tabbase}. The table also includes the hydrocarbon emission amount on each link, which is calculated from the MILP solution and the detailed modal emission model elaborated in Section \ref{secmodal}. The purpose of this table is two-fold: (1) to enable a comparison between the base case and the emission-constrained case presented later; and (2) to suggest appropriate upper bounds on the emission amount for the emission-constrained case. In the presentation of our results, we only consider links $1$ through $6$, since the rest of the links are not directly controlled by the signals under consideration.

\begin{table}[h!]
\centering
\begin{tabular}{|c|c|c|c|c|c|c|c|}
\hline
\multicolumn{1}{|c|}{\multirow{4}{*}{Scenario I} }  &  Objective value & \multicolumn{6}{|l|}{5.331}
\\
\cline{2-8}
&  \multicolumn{1}{|c|}{\multirow{2}{*}{HC emissions (gram)}}   & link 1  & link 2   & link 3     & link 4   & link 5   & link 6  
\\
\cline{3-8}  
&  &  390.5 &  309.1  &  208.6  &  158.7  & 343.4  & 210.2
\\
\cline{2-8} 
& Total emission (gram) & \multicolumn{6}{|l|}{1620.5}

\\
\hline 
\multicolumn{1}{c}{\multirow{8}{*}{}}\\
\hline

\multicolumn{1}{|c|}{\multirow{4}{*}{Scenario II} }  &  Objective value & \multicolumn{6}{|l|}{6.615}
\\
\cline{2-8}
&  \multicolumn{1}{|c|}{\multirow{2}{*}{HC emissions (gram)}}   & link 1  & link 2   & link 3     & link 4   & link 5   & link 6  
\\
\cline{3-8}  
&  &  558.0 &  400.4  &  263.5  &  214.3  & 514.8  & 252.9
\\
\cline{2-8} 
& Total emission (gram) & \multicolumn{6}{|l|}{2203.9}

\\
\hline 
\multicolumn{1}{c}{\multirow{8}{*}{}}\\
\hline

\multicolumn{1}{|c|}{\multirow{4}{*}{Scenario III} }  &  Objective value & \multicolumn{6}{|l|}{7.142}
\\
\cline{2-8}
&  \multicolumn{1}{|c|}{\multirow{2}{*}{HC emissions (gram)}}   & link 1  & link 2   & link 3     & link 4   & link 5   & link 6  
\\
\cline{3-8}  
&  &  1359.2 &  430.6  &  560.1  &  269.1  &  553.0  & 252.7
\\
\cline{2-8} 
& Total emission (gram) & \multicolumn{6}{|l|}{3424.7}
\\\hline
\end{tabular}
\caption{Objective value and hydrocarbon (HC) emissions in the base case. The three traffic scenarios with increasing demand levels are considered. The objective value is expressed by \eqref{nsobj}.}
\label{tabbase}
\end{table}

\subsection{Simultaneous control of traffic and hydrocarbon emission}

In this subsection, we solve the signal optimization problem with emission side constraints (LWR-E). This is achieved by solving the MILP for the base case with additional emission-related robust counterpart expressed by \eqref{roconstraints1}-\eqref{roconstraints3}, where the detailed calibration of the uncertainty set is presented in Section \ref{subsecnummacro}. In view of the base case summarized in Table \ref{tabbase}, we chose the following upper bounds (in gram) on the emission amount for each link, where bounds strictly below the corresponding emissions  in the base case are underlined. 
\begin{itemize}
\item Scenario I: \quad $E_1={\underline{390}},~~ E_2=310,~~ E_3=210,~~ E_4=160,~~ E_5={\underline{310}}, ~~ E_6=240$;
\item Scenario II: \quad $E_1=600,~~ E_2={\underline{380}},~~ E_3=300,~~ E_4={\underline{210}},~~ E_5= {\underline{490}}, ~~ E_6={\underline{250}}$;
\item Scenario III: \quad $E_1={\underline{1100}},~~ E_2=440,~~ E_3=750,~~ E_4=300,~~ E_5= 600, ~~ E_6=300$.
\end{itemize}

Notice that we did not set all the bounds to be strictly below the actual emissions in the base case, because of the apparent trade-off of vehicle throughput and emission among links connected to the same intersection.  Thus, under the same network load, a signal control strategy is unlikely to simultaneously reduce the emissions on all the links. In fact, setting all the bounds to be strictly below the actual emissions in the base case is likely to yield infeasibility in most of our calculations.

\begin{table}[h!]
\centering
\begin{tabular}{|c|c|c|c|c|c|c|c|}
\hline
\multicolumn{1}{|c|}{\multirow{5}{*}{Scenario I} }  &  Objective value & \multicolumn{6}{|l|}{5.330 {\bf (0.02\% less than the base case)}}
\\
\cline{2-8}
&  \multicolumn{1}{|c|}{\multirow{2}{*}{HC emissions (gram)}}   & link 1  & link 2   & link 3     & link 4   & link 5   & link 6  
\\
\cline{3-8}  
&  \multicolumn{1}{|c|}{\multirow{2}{*}{(upper bound)}}&  390.2 &  313.3  &  204.8  &  161.3  & 289.8  & 229.0
\\
&  &  (390) &  (310)  &  (210)  &  (160)  & (310)  & (240)
\\
\cline{2-8} 
& Total emission (gram) & \multicolumn{6}{|l|}{1588.4 grams {\bf (1.98\% less than the base case)}}

\\
\hline 
\multicolumn{1}{c}{\multirow{8}{*}{}}\\
\hline

\multicolumn{1}{|c|}{\multirow{5}{*}{Scenario II} }  &  Objective value & \multicolumn{6}{|l|}{6.612 {\bf (0.05\% less than the base case)}}
\\
\cline{2-8}
&  \multicolumn{1}{|c|}{\multirow{2}{*}{HC emissions (gram)}}   & link 1  & link 2   & link 3     & link 4   & link 5   & link 6  
\\
\cline{3-8}  
&  \multicolumn{1}{|c|}{\multirow{2}{*}{(upper bound)}} &  598.5 &  369.2   &  303.8  &  214.6  & 443.8  & 247.1
\\
&  &  (600) &  (380)  &  (300)  &  (210)  & (490)  & (250)
\\
\cline{2-8} 
& Total emission (gram) & \multicolumn{6}{|l|}{2157.0 grams {\bf (2.12\% less than the base case)}}

\\
\hline 
\multicolumn{1}{c}{\multirow{8}{*}{}}\\
\hline

\multicolumn{1}{|c|}{\multirow{5}{*}{Scenario III} }  &  Objective value & \multicolumn{6}{|l|}{7.058 {\bf (1.18\% less than the base case)}}
\\
\cline{2-8}
&  \multicolumn{1}{|c|}{\multirow{2}{*}{HC emissions (gram)}}   & link 1  & link 2   & link 3     & link 4   & link 5   & link 6  
\\
\cline{3-8}  
&\multicolumn{1}{|c|}{\multirow{2}{*}{(upper bound)}}  &  1160.5 &  434.6  &  756.4  &  272.1  & 622.2  & 294.5
\\
&  &  (1100) &  (440)  &  (750)  &  (300)  & (600)  & (300)
\\
\cline{2-8} 
& Total emission (gram) & \multicolumn{6}{|l|}{3540.3 grams {\bf (3.38\% more than the base case)}}
\\\hline
\end{tabular}
\caption{Objective value and hydrocarbon (HC) emissions in the LWR-E case.}
\label{tabro}
\end{table}

The results of the proposed MILP formulation for the emission-constrained signal optimization are summarized in Table \ref{tabro}.   We see that for all the three scenarios our proposed signal optimization scheme effectively keeps the emissions below the prescribed level; and this is done at a relatively small cost to the overall throughput of the network; that is, compared with the base case, the objective values in the LWR-E case decrease by only 0.02\%, 0.05\% and 1.18\% in Scenarios I, II and III, respectively (recall that while lower total emissions are desired, the objective function is such that a lower value represents a worse performance). In addition, for Scenarios I and II, bounding the emission amount on certain links successfully reduces the total emission on the entire network, by 1.98\% and 2.12\% respectively.

In Scenario 3, where the traffic load is the heaviest, we observe that, despite a significant local emission reduction (on link 1 where emission has been reduced from 1359 grams to 1160 grams), the overall network emission increases by 3.38\%. Moreover, the total network throughput suffers more than the previous two scenarios. This further highlights the potential trade-off  not only between traffic delay and emission, but also between the local and global emission amount. And such a trade-off is likely to be significant when the network is heavily congested (demand Scenario III). Nevertheless, in certain cases a trade-off between local and global emissions, such as that shown in Scenario of Table \ref{tabro}, is desirable as some links may be closer to densely populated areas than others, and shifting the emissions on those links to other parts of the network results in a lower impact on population exposure and public health. Therefore, this numerical example highlights the need for a well-planned, multi-criteria signal optimization strategy based on well-defined {\it key performance indicators}, and additional tools for the modeling of pollutant dispersion and public exposure. These are, however, beyond the scope of this paper.

A visualization of the emission profiles for all the three test scenarios is provided in Figure \ref{figEmissions}.

\begin{figure}[h!]
\centering
\includegraphics[width=.9\textwidth]{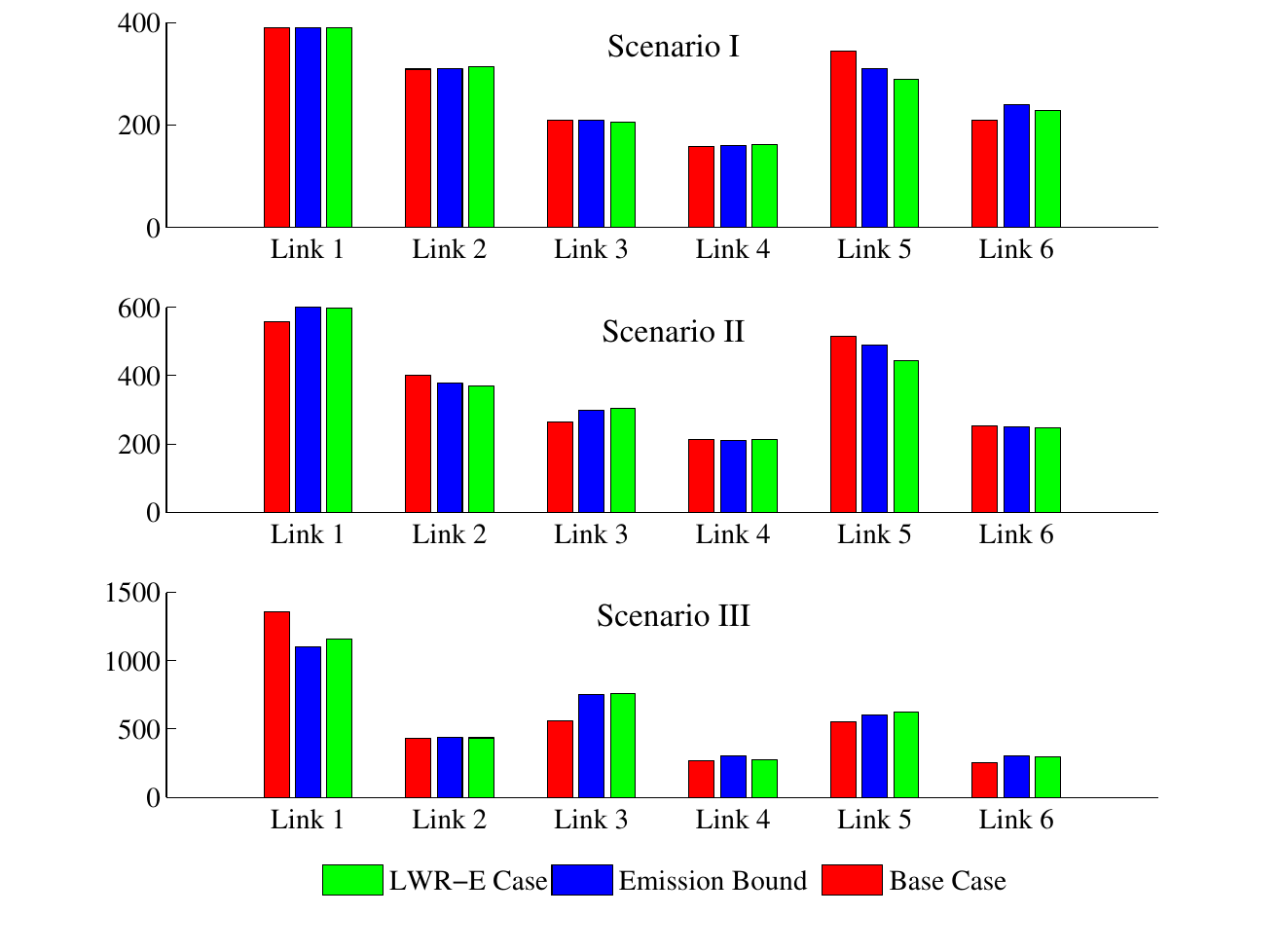}
\caption{Comparisons of link-specific HC emissions in the base cases and the emission-constrained (LWR-E) cases.}
\label{figEmissions}
\end{figure}

 We also see from Table \ref{tabro} that a few emission constraints are slightly violated; this is expected for the following two reasons: (1) the calibration process for the uncertainty set presented in Section \ref{subsecnummacro} ignores part of the data points that lie outside of the uncertainty region; (2) we chose the parameter $\sigma$ to be larger than 1 (see \eqref{nesigma}), which means that the approach taken to handle the uncertainty in the AER is relatively less conservative and could lead to, with a small probability, violation of the constraints. We present a quantification of the violations in Table \ref{tabviolation} for all the links in all the three scenarios, which shows that the violations of the constraints are within an acceptable range.

\begin{table}[h!]
\centering
\begin{tabular}{c|c|c|c|c|c|c|}
\cline{2-7}
     & link 1  & link 2   & link 3     & link 4   & link 5   & link 6  
\\
\hline  
  \multicolumn{1}{|c|}{\multirow{1}{*}{Scenario I}}   &  0.05\% &  1.06\%  &  -  &  0.81\%  & -  & -
\\
\hline
 \multicolumn{1}{|c|}{\multirow{1}{*}{Scenario II}}  &  - &  -  &  1.27\%  &  2.19\%  & -  & -
\\
\hline
 \multicolumn{1}{|c|}{\multirow{1}{*}{Scenario III}}  &  5.45\% &  -  &  0.85\%  &  -  & 3.70\%  & -
\\
\hline
\end{tabular}
 \caption{Violations of the emission constraints. ``-" means that the actual emission is below the bound.}
\label{tabviolation}
\end{table}

We investigate a specific intersection $B$ to illustrate the effects of the proposed signal control strategy. The base case is compared with the LWR-E case under Scenario II. As can be seen from Table \ref{tabbase} and Table \ref{tabro}, the total emission amount on the two incoming links, 2 and 5, are respectively 400 g and 515 g (base), and 369 g and 444 g (LWR-E). Moreover, the emission upper bounds imposed by the LWR-E problem are 380 g and 490 g. As we see from Figure \ref{figSimulation_Result} that the aggregate emission rate on a link is highly correlated to the level of congestion (i.e. the link occupancy) on that link. Thus, in order to simultaneously reduce the total emissions on links 2 and 5 to a point below their respectively upper bounds, the signal controls in the LWR-E case reduced the inflow of link 5 from the upstream node $A$ to alleviate its congestion, and the signal at node $B$ acted accordingly to reduce the congestion on link 2 by allocating more green time to that link. As a result, the congestion and emission amount on links 2 and 5 was reduced simultaneously.  However, this improvement in the LWR-E case was offset by the fact that the congestion was accumulated on links 1 and 3 as a result of reduced link 5 inflow, causing the emission on these links to increase: respectively from 558 g to 598 g (link 1) and 264 g to 304 g (link 3).

To visualize the effect of the LWR-E signal control, we show the Moskowitz functions of link 5 in the base and LWR-E cases in Figure \ref{figNcomparison} (similar trends on link 2 will not be show here due to space limitation). It can be seen that the Moskowitz surface is separated by a ``shock wave", which is shown as a kink in this case, into two domains: the uncongested region (on the side of the link entrance) and the congested region (on the side of the link exit). The separating shock wave travels back and forth as a result of the changing downstream boundary conditions caused by the signal control, indicating the growth and dissipation of queues.  From Figure \ref{figNcomparison}  we see that the LWR-E case yields less queuing on link $5$ than the base case. The comparison between the base case and the LWR-E case at junction B suggests that the LWR-E case yields less queuing near this intersection, which reduces vehicle acceleration and deceleration, both of which contribute significantly to the emissions near the stop line.

\begin{figure}[h!]
\centering
\begin{minipage}[c]{0.9\textwidth}
\centering
\includegraphics[width=.8\textwidth]{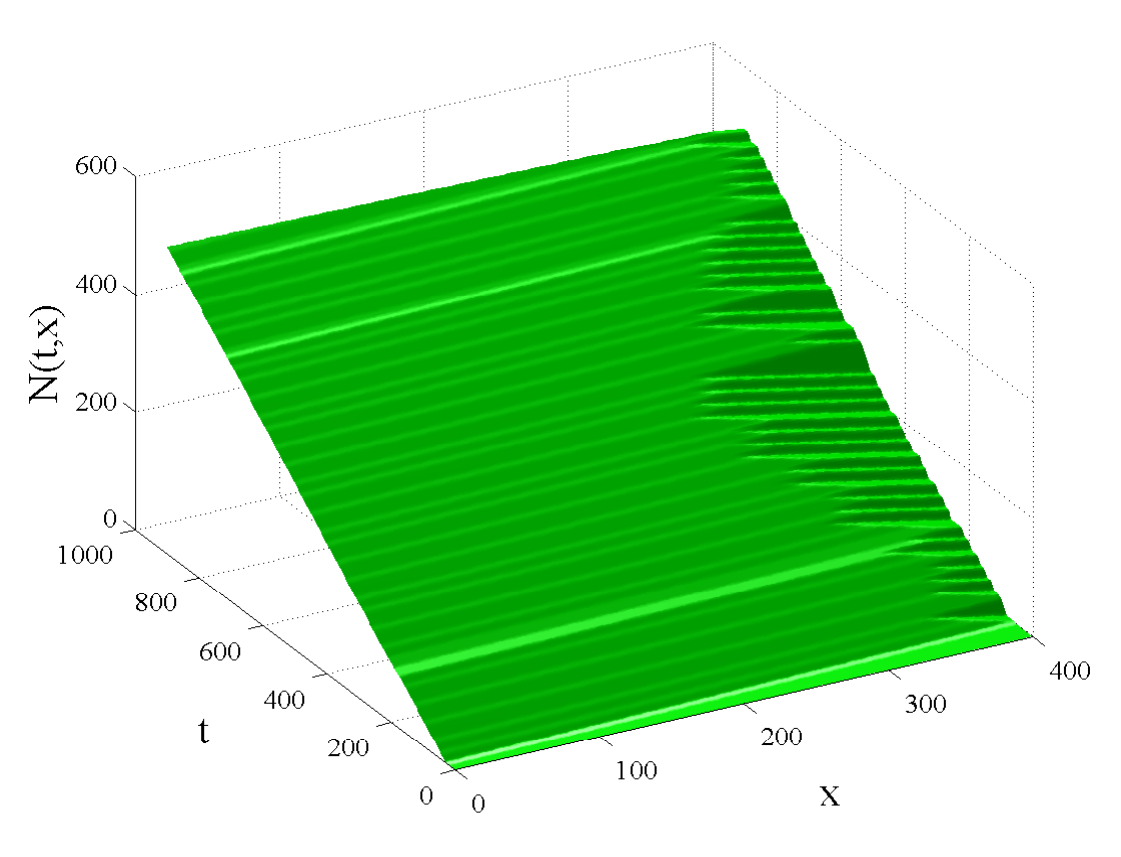}
\end{minipage}
\begin{minipage}[c]{0.9\textwidth}
\centering
\includegraphics[width=.8\textwidth]{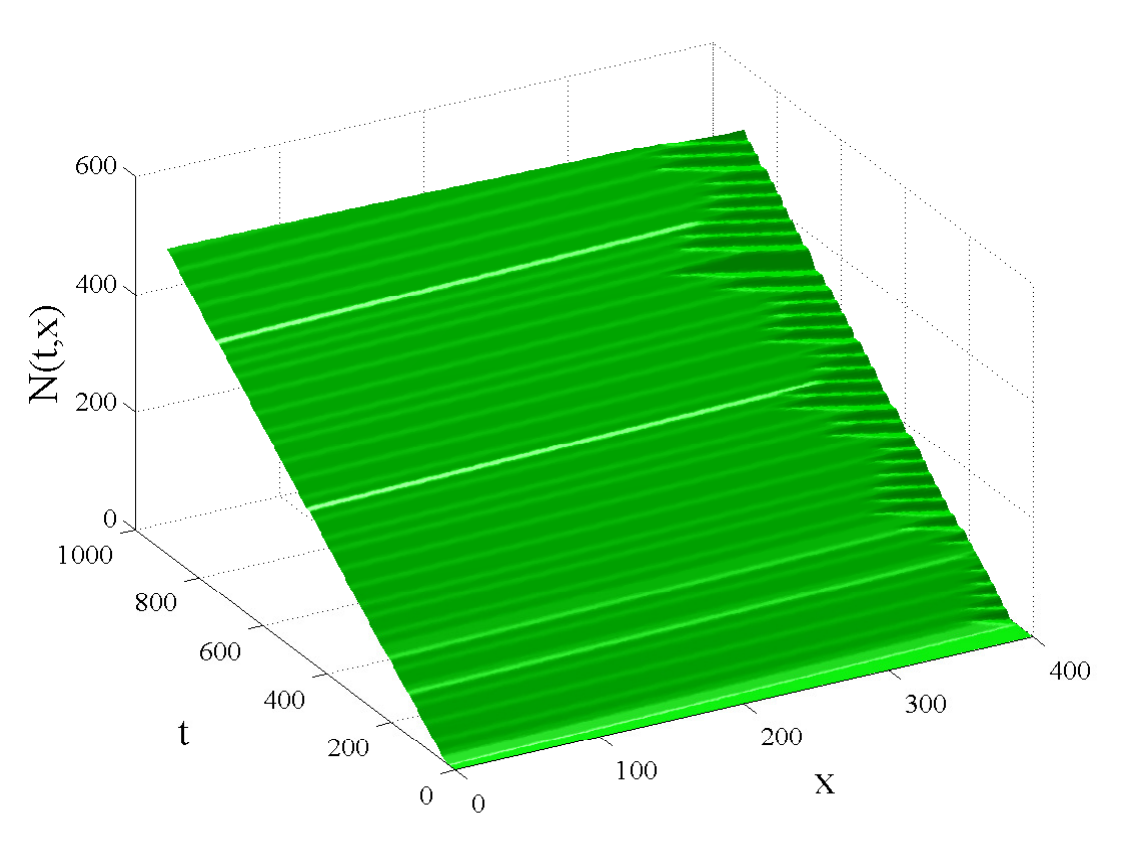}
\end{minipage}
\caption{The Moskowitz functions of link $5$ in the base case (top) and the LWR-E case (bottom).}
\label{figNcomparison}
\end{figure}

To further illustrate the effect on emission of vehicle queuing near the intersection, we show in Figure \ref{figcontour} the contour lines of the Moskowitz function of link 5, which represent vehicle trajectories in the space-time diagram.  We can clearly observe that the LWR-E case on average yields fewer vehicle stops than the base case. To further quantify this, we perform the following simple calculation: the number of stops (represented by the horizontal line segments in the trajectories) in the base case is approximately 76, while the number of stops in the LWR-E case is roughly 52. Given that there are 50 contour lines in each figure, we estimate that the average number of stops per vehicle is 1.52 in the base case, and 1.04 in the LWR-E case. Similarly, the average number of vehicle stops on link $2$ is 0.34 in the base case and 0.1 in the LWR-E case, although the corresponding contour lines are not show here. This explains the reduction of emission on these links as vehicle stops and acceleration/deceleration have been reduced by the signal controls in the LWR-E case.

\begin{figure}[h!]
\centering
\includegraphics[width=\textwidth]{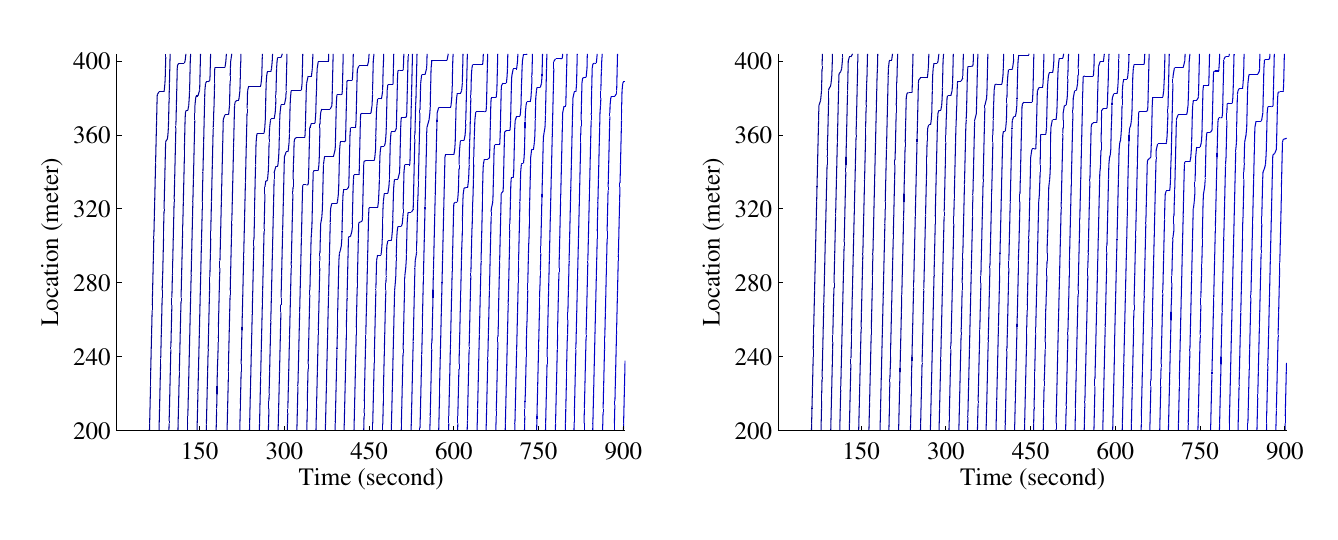}
\caption{Contour lines of the Moskowitz functions of link 5, in the base case (left), and the LWR-E case (right).}
\label{figcontour}
\end{figure}

\section{Conclusion}\label{secConclusion}

We propose here a signal optimization formulation to simultaneously minimize expected vehicle delays while accounting for constraints/objectives related to vehicular emissions throughout a network. The latter is incorporated by developing macroscopic relationships that describe the range of expected emissions on a link as a function of the occupancy of that link. Such reduced models allow emissions to be easily calculated in a network traffic model framework, as opposed to traditional techniques that require the trajectory of individual drivers in the network. Furthermore, we specifically account for the errors that exist as a result of the reduced and approximate emissions models through the use of robust optimization. This formulation entails the following theoretical advantages: (1) it is computationally tractable as a result of linearity maintained through our (piecewise) linear approximation to emission constraints; (2) it is robust to model inaccuracy through the use of robust optimization; and (3) it appropriately captures queue spillbacks to upstream links through a set of mixed integer linear constraints.

In contrast, without the RO, the only existing alternative to our knowledge is incorporating emission-relation constraints as highly nonlinear and nonconvex side constraints. The signal control model then becomes a mixed integer nonlinear program (MINLP), for which there are limited theories for analysis and scarce solution methods for computation. Existing solvers for MINLP are generally slow and a global optimal solution cannot be guaranteed. In contrast, our proposed approach results in a mixed integer linear program (MILP) for the same problem, which admits effective and well-developed solution schemes that guarantee global optimality. Moreover, state-of-the-art algorithms implemented within commercial solvers highly exploits the linear structures of MILPs and thus enjoy much improved and even close-to-tractable computational efficiency \citep{BCR}.

A numerical study on a hypothetical network demonstrates the effectiveness of the proposed procedure in bounding/reducing emissions, while maintaining a satisfactory performance of the signalized network in terms of throughputs. However, as pointed out by Scenario III in the study, the local bounding/minimization of emissions may lead to network-wide degradation in terms of both air quality and throughput. Thus, a well-balanced signal timing plan with environmental concerns requires multiple criteria and proper weighting of different objectives.

The following data would be needed to effectively apply this methodological framework in practice: knowledge of network geometries (e.g., link lengths and fundamental diagrams), measures of traffic flows at the boundaries of the network at regular intervals (perhaps through fixed-location induction loop detectors commonly available in urban networks), anticipated or historical turning fractions at all intersections (e.g. using turn-by-turn vehicle counts), and operation constraints related to signal phases, green/red times, and signal offsets. An immediate follow-up of this research will focus on an extension of the framework to account for more realistic signal control scenarios (e.g., fixed-cycle-dynamic-split, dynamic-cycle-fixed-split, etc.) through the introduction of additional constraints on signal timing parameters. The proposed linearization of the emission constraints is also applicable for on-line signal controls based on real-time information by following the general framework proposed by \cite{LHGFY}.

\appendix
\section{Proof of Theorem \ref{piecewise linear convex theorem}}\label{appendix A}

\begin{proof}
Notice that \eqref{discretePWASI} can be immediately rewritten as
\begin{equation}
\max_{\hat{\boldsymbol b}\in\hat{\boldsymbol\eta}_{b}} \delta t\sum_{k=1}^M \max_{m\in \mathbb M}\left(  b_{1,m}^k\cdot N^{k}+b_{0,m}^k\right)~\leq~ E,\nonumber
\end{equation}
\noindent which is equivalent to 
\begin{equation}\label{first step}
\max_{\hat{\boldsymbol b}\in\hat{\boldsymbol\eta}_{b},(\upsilon_{m,k})\in \mathbb{V}}\delta t\sum_{k=1}^M\sum_{m\in\mathbb M}\upsilon_{m,k}\left(  b_{1,m}^k\cdot N^{k}+b_{0,m}^k\right)~\leq~ E
\end{equation}
Here, $\mathbb{V}=\left\{(\upsilon_{m,k}:~m\in\mathbb M, ~1\leq k\leq M)\in\{0,~1\}^{\vert \mathbb M\vert\times M},~\sum_{m\in\mathbb M} \upsilon_{m,k}=1,~\forall 1\leq k\leq M\right\}$.
Consider the maximization problem for any given $\left(\upsilon_{m,k}\right)\in\mathbb{V}$ and feasible vector $\{N^{k}:~1\leq k\leq M\}$:
$$
\max_{\hat{\boldsymbol b}\in\hat{\boldsymbol\eta}_{b}}\sum_{k=1}^M\sum_{m\in\mathbb M}\upsilon_{m,k}\left(  b_{1,m}^k\cdot N^{k}+b_{0,m}^k\right)\delta t
$$
\noindent It is easy to check that the dual problem is formulated as follows:
\begin{align*}
\begin{split}
\min ~&\sum_{m\in\mathbb M}\sum_{k=1}^M \left( -L_{1,m}\gamma_{1,m}^k+\beta_{1,m}^k U_{1,m}+ U_{0,m}\beta_{0,m}^k-L_0\gamma_{0,m}^k\right) +\theta\frac{M\sum_{m\in\mathbb M}U_{1,m}}{\sigma}
\\
s.t.~~&-\gamma_{1,m}^k+\beta_{1,m}^k+\theta~=~ N^k \upsilon_{m,k}\delta t\qquad \forall 1\leq k\leq M,~m\in\mathbb M 
\\
&-\gamma_{0,m}^k+\beta_{0,m}^k~=~\upsilon_{m,k}\delta t\qquad \forall 1\leq k\leq M,~m\in\mathbb M 
\\ 
& \gamma_{1,m}^k~\geq~ 0,~\beta_{1,m}^k~\geq~ 0, ~\gamma_{0,m}^k~\geq~ 0,~\beta_{0,m}^k~\geq~ 0\qquad \forall  1\leq k\leq M,~m\in\mathbb M 
\\& \theta~\geq~ 0
\end{split}
\end{align*}

Therefore, the satisfaction of \eqref{first step} can be restated as follows:
\begin{align*}
\begin{split}
& \sum_{m\in\mathbb M}\sum_{k=1}^M \left(- L_{1,m}\gamma_{1,m}^k+\beta_{1,m}^k U_{1,m}+ U_{0,m}\beta_{0,m}^k-L_0\gamma_{0,m}^k\right)+\theta\frac{M\sum_{m\in\mathbb M}U_{1,m}}{\sigma} ~\leq~E
\\
&-\gamma_{1,m}^k+\beta_{1,m}^k+\theta~=~ N^k \upsilon_{m,k}\delta t\qquad \forall 1\leq k\leq M,~m\in\mathbb M 
\\&-\gamma_{0,m}^k+\beta_{0,m}^k~=~\upsilon_{m,k}\delta t\qquad \forall 1\leq k\leq M,~m\in\mathbb M 
\\ & \gamma_{1,m}^k~\geq~ 0,~\beta_{1,m}^k~\geq~ 0, ~\gamma_{0,m}^k~\geq~ 0,~\beta_{0,m}^k~\geq~ 0\qquad \forall  1\leq k\leq M,~m\in\mathbb M 
\\& \theta~\geq~ 0 
\end{split}
\end{align*}
\noindent This should hold for all possible choices of $\left(\upsilon_{m,k}\right)\in\mathbb V$, which immediately leads to the desired result.
\end{proof}

\section{Proof of Theorem \ref{thmconcavepwa}}\label{appendix B}
\begin{proof}
Constraint \eqref{concave discrete piecewise linear} is equivalent to stipulating that
\begin{equation}\label{some interm problem}
\max_{\hat{\boldsymbol b}\in\hat{\boldsymbol\eta}_b}\left\{\sum_{k=1}^M \min_{m\in \mathbb M}\left(  b_{1,m}^k \cdot N^{k}+b_{0,m}^k\right)\,\delta t\right\}=\max_{\hat{\boldsymbol b}\in\hat{\boldsymbol\eta}_b}\min_{\boldsymbol \upsilon\in V}\underbrace{\left\{\sum_{k=1}^M \sum_{m\in\mathbb M}\upsilon_{m,k}(  b_{1,m}^k\cdot N^{k}+b_{0,m}^k)\delta t\right\}}_{\mathcal F(\hat{\boldsymbol b},~\boldsymbol \upsilon)}~\leq~ E
\end{equation}
\noindent where $V\doteq \{{\boldsymbol \upsilon}=(\upsilon_{m,k}:~m\in\mathbb M, ~1\leq k\leq M)\in[0,\,1]^{\vert\mathbb M\vert\times M}:~\sum_{m\in\mathbb M} {\upsilon_{m,k}}=1,~\forall 1\leq k\leq M\}$.  Since $\mathcal F(\hat{\boldsymbol b},~\boldsymbol \upsilon)$ is convex in $\boldsymbol\upsilon$ and concave in $\hat{\boldsymbol b}$, we can switch the ``max'' and ``min'' operator to obtain a dual problem without duality gap, i.e.,
\begin{equation}\label{strong duality}
\max_{\hat{\boldsymbol b}\in\hat{\boldsymbol\eta}_b}\min_{\boldsymbol \upsilon\in V} \mathcal F(\hat{\boldsymbol b},~\boldsymbol \upsilon)~=~\min_{\boldsymbol \upsilon\in V}\max_{\hat{\boldsymbol b}\in\hat{\boldsymbol\eta}_b}\mathcal F(\hat{\boldsymbol b},~\boldsymbol \upsilon)
\end{equation}
\noindent for any feasible vector  $(N^{k}:~1\leq k\leq M)$. Combining \eqref{some interm problem}  and  \eqref{strong duality}, we have an alternative formulation of constraint \eqref{concave discrete piecewise linear} given as
\begin{align}
E~\geq ~&\max_{\hat{\boldsymbol b}\in\hat{\boldsymbol\eta}_b}\min_{\boldsymbol \upsilon\in V} \mathcal F(\hat{\boldsymbol b},~\boldsymbol \upsilon)\nonumber
\\=~&\min_{\boldsymbol \upsilon\in V}\max_{\hat{\boldsymbol b}\in\hat{\boldsymbol\eta}_b}\left\{\sum_{k=1}^M\sum_{m\in\mathbb M}\upsilon_{m,k} (  b_{1,m}^k\cdot N^{k}+b_{0,m}^k)\delta t\right\} \nonumber
\\
\label{secondtolast}
=~&\min_{\boldsymbol \upsilon\in V}\left\{\sum_{m\in\mathbb M}\left[\max_{\hat{\boldsymbol b}\in\hat{\boldsymbol\eta}_{b}}\sum_{k=1}^M \upsilon_{m,k}(  b_{1,m}^k\cdot N^{k}+b_{0,m}^k)\delta t\right]\right\}
\end{align}

Consider the inner problem of \eqref{secondtolast} for given $\boldsymbol \upsilon\in V$ and $m\in\mathbb M$:
\begin{align}
\max_{\hat{\boldsymbol b}\in\hat{\boldsymbol\eta}_{b}}\sum_{k=1}^M \upsilon_{m,k}(  b_{1,m}^k\cdot N^{k}+b_{0,m}^k)\delta t.\nonumber
\end{align}
We readily see that its dual formulation is given as (note that $\boldsymbol \upsilon$ and $m$ are fixed here):
\begin{align*}
G^*(\boldsymbol\upsilon,\,m)~\doteq~\min ~~&\sum_{k=1}^M\left( - L_{1,m}\gamma_{1,m}^k+\beta_{1,m}^k U_{1,m} + U_{0,m}\beta_{0,m}^k-L_0\gamma_{0,m}^k\right)+\theta_m\frac{M U_{1,m}}{\sigma_m} 
\\
s.t.~~&-\gamma_{1,m}^k+\beta_{1,m}^k+\theta_m~=~ N^k \upsilon_{m,k}\delta t\qquad \forall 1\leq k\leq M 
\\
&-\gamma_{0,m}^k+\beta_{0,m}^k~=~\upsilon_{m,k}\delta t\qquad \forall 1\leq k\leq M\nonumber
\\
 & \gamma_{1,m}^k~\geq~ 0,~\beta_{1,m}^k~\geq~ 0, ~\gamma_{0,m}^k~\geq~ 0,~\beta_{0,m}^k~\geq~ 0\qquad \forall  1\leq k\leq M 
\\
& \theta_m~\geq~ 0
\end{align*}
By strong duality, 
\begin{equation}\label{strong duality linear program 1}
G^*(\boldsymbol\upsilon,\,m)~=~\max_{\hat{\boldsymbol b}\in\hat{\boldsymbol\eta}_{b}}\sum_{k=1}^M \upsilon_{m,k}(  b_{1,m}^k\cdot N^{k}+b_{0,m}^k)\delta t
\end{equation}
\noindent Therefore, by further invoking \eqref{secondtolast}, constraint \eqref{concave discrete piecewise linear} is equivalent to 
\begin{equation}\label{B.5 to use}
E~\geq~ \max_{\hat{\boldsymbol b}\in\hat{\boldsymbol\eta}_b}\min_{\boldsymbol \upsilon\in V} \mathcal F(\hat{\boldsymbol b},~\boldsymbol \upsilon)~=~\min_{\boldsymbol \upsilon\in V}\left\{\sum_{m\in\mathbb M}G^*(\boldsymbol\upsilon,\,m)\right\}
\end{equation}
In view of the equality in \eqref{some interm problem}, we know that there exists $(\upsilon_{m,k}^*)\in\{0,~1\}^{\vert\mathbb M\vert\times M}\cap V$ such that
\begin{equation}\label{B.6 to use}
\max_{\hat{\boldsymbol b}\in\hat{\boldsymbol\eta}_b}\min_{\boldsymbol \upsilon\in V} \mathcal F(\hat{\boldsymbol b},~\boldsymbol \upsilon)~=~\max_{\hat{\boldsymbol b}\in\hat{\boldsymbol\eta}_b}\mathcal F(\hat{\boldsymbol b},~(\upsilon_{m,k}^*))
\end{equation}
Combining  \eqref{strong duality linear program 1},  \eqref{B.5 to use}, and \eqref{B.6 to use},  we may continue to get
\begin{align*}
~&\min_{\boldsymbol \upsilon\in V}\left\{\sum_{m\in\mathbb M}  G^*(\boldsymbol\upsilon,\,m)\right\}~=~\max_{\hat{\boldsymbol b}\in\hat{\boldsymbol\eta}_b}\min_{\boldsymbol \upsilon\in V} \mathcal F(\hat{\boldsymbol b},~\boldsymbol \upsilon)~=~\max_{\hat{\boldsymbol b}\in\hat{\boldsymbol\eta}_b}\mathcal F(\hat{\boldsymbol b},~(\upsilon_{m,k}^*))
\\
=~&\max_{\hat{\boldsymbol b}\in\hat{\boldsymbol\eta}_b}\left\{\sum_{m\in\mathbb M}\sum_{k=1}^M\upsilon_{m,k}^* (  b_{1,m}^k\cdot N^{k}+b_{0,m}^k)\delta t\right\} ~=~\sum_{m\in\mathbb M}\max_{\hat{\boldsymbol b}\in\hat{\boldsymbol\eta}_b}\left\{\sum_{k=1}^M\upsilon_{m,k}^* (  b_{1,m}^k\cdot N^{k}+b_{0,m}^k)\delta t\right\}
\\
=~&\sum_{m\in\mathbb M} G^*((\upsilon^*_{m,k}),\,m)
\end{align*}
Recall that $(\upsilon_{m,k}^*)\in\{0,~1\}^{\vert\mathbb M\vert\times M}\cap V$. Then the above equality indicates that  
$$
\min_{\boldsymbol \upsilon\in V}\left\{\sum_{m\in\mathbb M} G^*(\boldsymbol\upsilon,\,m)\right\}~=~\min_{\boldsymbol \upsilon\in V\cap\{0,~1\}^{\vert\mathbb M\vert\times M}}\left\{\sum_{m\in\mathbb M}G^*(\boldsymbol\upsilon,\,m)\right\}
$$
Invoking \eqref{B.5 to use} again, we equivalently write constraint \eqref{concave discrete piecewise linear} as
\begin{equation}\label{last but one}
\min_{\boldsymbol \upsilon\in V\cap\{0,~1\}^{\vert\mathbb M\vert\times M}}\left\{\sum_{m\in\mathbb M}G^*(\boldsymbol\upsilon,\,m)\right\}~\leq~ E
\end{equation}
\noindent which is the same as requiring that there exists some $\boldsymbol \upsilon\in V\cap\{0,~1\}^{\vert\mathbb M\vert\times M}$ such that $\sum_{m\in\mathbb M}G^*(\boldsymbol\upsilon,\,m)\leq E$. In addition, notice that $V\cap\{0,~1\}^{\vert\mathbb M\vert\times M}=\{(\upsilon_{m,k})\in\{0,~1\}^{\vert\mathbb M\vert\times M}:~\sum_{m\in\mathbb M}\upsilon_{m,k}=1,~\forall 1\leq k\leq M\}$. Combining this with the definition of $G^*(\boldsymbol\upsilon,\,m)$, we see that constraint \eqref{concave discrete piecewise linear} is equivalent to the following system.
\begin{align}
\begin{split}
&\sum_{m\in\mathbb M}\left\{-\sum_{k=1}^M L_{1,m}\gamma_{1,m}^k+\sum_{k=1}^M\beta_{1,m}^k U_{1,m}+\theta_m\frac{M U_{1,m}}{\sigma_m}+\sum_{k=1}^M U_{0,m}\beta_{0,m}^k-\sum_{k=1}^ML_0\gamma_{0,m}^k\right\}~\leq~ E
\\&-\gamma_{1,m}^k+\beta_{1,m}^k+\theta_m~=~ N^k \upsilon_{m,k}\delta t\qquad \forall 1\leq k\leq M,~m\in\mathbb M
\\&-\gamma_{0,m}^k+\beta_{0,m}^k~=~\upsilon_{m,k}\delta t \qquad \forall 1\leq k\leq M,~m\in\mathbb M
\\ & \gamma_{1,m}^k~\geq~ 0,~\beta_{1,m}^k~\geq~ 0, ~\gamma_{0,m}^k~\geq ~0,~\beta_{0,m}^k~\geq~ 0\qquad \forall  1\leq k\leq M,~m\in\mathbb M
\\& \sum_{m\in\mathbb M}\upsilon_{m,k}~=~1\qquad \forall  1\leq k\leq M
\\& \upsilon_{m,k}\in\{0,~1\}\qquad \forall  1\leq k\leq M,~m\in\mathbb M
\\& \theta_m~\geq~ 0\qquad~m\in\mathbb M
\end{split}\label{nonlinear term}
\end{align}
Observe that the second line of \eqref{nonlinear term}  involves a nonlinear term, which can be replaced with linear constraints with mixed integers using  the ``Big-M'' method. The resulting mixed integer linear constraints are presented in the second and the third lines of \eqref{concave piecewise 1}. 
\end{proof}

\bibliographystyle{model2-names}
\bibliography{<your-bib-database>}



\end{document}